\newcommand{\OO}{\mathcal {O}}
\newcommand{\F}{\mathbf {F}}
\newcommand{\A}{\mathbb {A}}
\newcommand{\N}{\mathbb {N}}
\newcommand{\eps}{\varepsilon}
\DeclareMathOperator{\Spec}{Spec}
\DeclareMathOperator{\edim}{edim}
\DeclareMathOperator{\ecodim}{ecodim}
\DeclareMathOperator{\ejump}{ejump}
\DeclareMathOperator{\pdeg}{\mathit{p}-deg}
\DeclareMathOperator{\trdeg}{tr.deg}
\newtheorem{theorem}[equation]{Theorem}
\newtheorem{lemma}[equation]{Lemma}
\newtheorem{proposition}[equation]{Proposition}
\newtheorem{corollary}[equation]{Corollary}
\newtheorem{remark}[equation]{Remark}
\newtheorem{notation}[equation]{Notation}
\newtheorem{question}[equation]{Question}
\theoremstyle{definition}
\newtheorem{definition}[equation]{Definition}
\newtheorem{definition/lemma}[equation]{Definition/Lemma}
\title{A bound on embedding dimensions of geometric generic fibers}
\author{Zachary Maddock}
\begin{document}
\maketitle
\begin{abstract}
  We limit the singularities that arise in geometric generic
  fibers of morphisms between smooth varieties of positive
  characteristic by studying changes in embedding dimension 
  under inseparable field extensions.
  We then use
  this result in the 
  context of the minimal model program to
  rule out the existence of smooth varieties fibered by certain
  non-normal del Pezzo surfaces over bases of small dimension.
\end{abstract}

\section{Introduction}
%\subsection{Purpose}

This paper investigates the singularities that arise in generic
fibers of morphisms between smooth varieties in positive
characteristic.  In characteristic $0$, any morphism
between smooth varieties admits a dense open locus of the
base
 over which all fibers are smooth.
  However, over 
fields of positive characteristic this is no longer the
case, as there exist morphisms between smooth varieties in which 
every fiber is singular (that is, non-smooth over its base field).  
A simple example,  occurring over an arbitrary field $k$ of characteristic $2$
(resp.~$3$), is the morphism $f:\A^2_k \to \A^1_k$ given by 
$(x,y) \mapsto x^2 + y^3$.  
%It helps to view $f$ as the composite
%of the inclusion
%$i: \A^2_k \inject \A^2_k \times \A^1_k$,
%given by $(x,y) \mapsto (x,y, x^2 + y^3)$, with the projection
%$\pi: \A^2_k \times \A^1_k \to \A^1_k$ given by $(x,y,t) \mapsto t$.  
%The image of $i$ is cut out by the equation $x^2 + y^3 - t = 0$, and in
%characteristic $2$ (resp.~$3$) this factors via $(x - \sqrt{t})^2 + y^3 =
%0$ (resp.~$x^2 + (y- \sqrt[3]{t})^3 = 0$), and so 
The fiber of
$f$ over
any point $t_0 \in \A^1_k$ is the planar curve defined by the equation
$x^2 + y^3 - t_0 = 0$, which clearly has a cuspidal singularity at the
geometric point $(\sqrt{t_0}, 0)$ (resp.~$(0, \sqrt[3]{t_0})$).

This phenomenon is more than just pathology, rather it is a feature 
of positive characteristic geometry that arises naturally when
attempting 
to study a class of smooth varieties
via morphisms to other
varieties.  One instance of this occurs in Mumford and Bombieri's
classification
of fibrations in characteristic $p > 0$, within the context of the
Enriques classification of surfaces (cf.~\cite{bom-mum-III},\cite{bom-mum-II}).
%%% CITE %%%
%A specific case is
%when the generic fiber is assumed to be a curve of arithmetic genus $1$. In
%characteristic $0$, this implies the fibration is an elliptic
%fibration with smooth general fibers, but 
As the above
example illustrates, when $p= 2$ or
$3$ there exist smooth surfaces fibered
in cuspidal curves of arithmetic genus $1$
(i.e.~the \emph{quasi-elliptic fibrations}).

\subsection*{Main results}
%To analyze
 Non-smooth points 
%arising
 in the generic fiber 
of a
morphism
% between smooth varieties 
lie under those 
%whose , we study the points
in the \emph{geometric} generic fiber, an algebraically closed
field extension of the generic fiber, 
at which
the stalk of the structure sheaf fails to be a regular local ring (cf.~Def.~\ref{definition-of-smooth}).  To
measure this failure, it is useful to recall the following definition:

\begin{definition}
The difference by which the embedding dimension
(cf.~\S\ref{section-defs}) at a (possibly 
non-closed) point $z$ of a 
variety $Z$ exceeds the codimension of that point is called 
the \emph{embedding codimension} of $z$ in $Z$,
$$\ecodim_Z(z) := \edim(\OO_{Z,z}) - \dim(\OO_{Z,z}).$$
\end{definition}

Clearly the embedding codimension of $z \in Z$ is nonnegative,
and equals zero if and only 
if $\OO_{Z,z}$ is a regular local ring, so we see that it does provide
some measure of the singularity at $z$. 
The main result of this paper
is the following bound on the embedding codimension of points in the
geometric generic fiber of a morphism between smooth varieties, 
which thus limits the possible singularities that arise:\\ 

\vspace*{-1em}

\begin{theorem}\label{theorem-intro-main}
Let $f: X \to S$ be a morphism between smooth varieties over a perfect
field $k$.  Then the generic fiber $X_{\xi}$ is a regular variety over the
function field of $S$, $\kappa := \kappa(\xi)$, and 
 any point $\bar x$ in the geometric generic fiber 
$\overline{X}_{\xi} := X_{\xi}
\times_{\kappa} \bar \kappa$ satisfies
\begin{equation}\label{inequality-geometric-version}
\ecodim_{\overline{X}_{\xi}}(\bar x) \leq \dim(S).
\end{equation}
%the embedding
%codimension of any  point $\bar x$ of the
%geometric generic fiber $\overline{X}_{\xi} := X_{\xi}
%\times_{\kappa} \bar \kappa$ 
%is bounded above by the dimension of $S$:
%\begin{equation}\label{inequality-geometric-version}
%\ecodim_{\overline{X}_{\xi}}(\bar x) \leq \dim(S).
%\end{equation}
\end{theorem}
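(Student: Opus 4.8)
The plan is to treat the two assertions separately, the first being essentially immediate and the second the real content.

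\emph{Regularity of $X_\xi$.} Since $\xi$ is the generic point of the integral scheme $S$, one has $\OO_{S,\xi}=\kappa$. Hence for a point $x\in X_\xi$ lying over $x'\in X$ (necessarily with $f(x')=\xi$) the local ring $\OO_{X_\xi,x}$ is a localization of $\OO_{X,x'}\otimes_{\OO_{S,\xi}}\kappa=\OO_{X,x'}$. As $X$ is smooth over $k$ it is regular, and localizations of regular local rings are regular, so $\OO_{X_\xi,x}$ is regular; together with the fact that $X_\xi$ is integral and of finite type over $\kappa$, this says $X_\xi$ is a regular variety over $\kappa$. (If $f$ is not dominant then $X_\xi=\varnothing$ and there is nothing to prove.)

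\emph{Reduction for the inequality.} Put $d=\dim S$, $\Omega=\bar\kappa$, $X_\Omega=X\times_k\Omega$, $S_\Omega=S\times_k\Omega$, and let $f_\Omega\colon X_\Omega\to S_\Omega$ be the base change of $f$; both $X_\Omega$ and $S_\Omega$ are smooth over $\Omega$. The composite $\Spec\Omega\to\Spec\kappa=\xi\hookrightarrow S$ together with the identity of $\Spec\Omega$ induces, by the universal property of $S_\Omega=S\times_k\Omega$, an $\Omega$-point $s'\colon\Spec\Omega\to S_\Omega$, which is a closed point with residue field $\Omega$; and a routine manipulation of fibre products (using $X_\Omega=X\times_S S_\Omega$) identifies $\overline X_\xi=X_\xi\times_\kappa\Omega$ with the fibre $f_\Omega^{-1}(s')$. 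Because $S_\Omega$ is smooth over $\Omega$ of dimension $d$, the local ring $\OO_{S_\Omega,s'}$ is regular of dimension $d$, so $\mathfrak m_{s'}$ is generated by $d$ elements $u_1,\dots,u_d$. Thus, viewing a point $\bar x\in\overline X_\xi$ as a point of $X_\Omega$ and writing $P:=\OO_{X_\Omega,\bar x}$ — a regular local ring, since $X_\Omega$ is regular — we get
\begin{equation*}
\OO_{\overline X_\xi,\bar x}=P/(u_1,\dots,u_d)P,
\end{equation*}
the $u_i$ now denoting their images under the local homomorphism $\OO_{S_\Omega,s'}\to P$. The essential point is that passing through the base change over $k$ replaces the infinite extension $\Omega/\kappa$ by the cutting out of $d$ equations inside a regular ambient ring.

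\emph{The estimate.} It remains to bound $\ecodim$ of a quotient of a regular local ring $P$ by $d$ elements. Let $\mathfrak m$ be the maximal ideal of $P$, let $E$ be its residue field, set $I=(u_1,\dots,u_d)P\subseteq\mathfrak m$ and $\bar R=P/I$. From $\mathfrak m_{\bar R}/\mathfrak m_{\bar R}^2=\mathfrak m/(\mathfrak m^2+I)$ one gets $\edim(\bar R)=\edim(P)-e$ with $e:=\dim_E\bigl((\mathfrak m^2+I)/\mathfrak m^2\bigr)\le d$, while $\dim(\bar R)\ge\dim(P)-d$ by Krull's height theorem. Since $P$ is regular, $\edim(P)=\dim(P)$, so
\begin{equation*}
\ecodim_{\overline X_\xi}(\bar x)=\edim(\bar R)-\dim(\bar R)\le\bigl(\dim P-e\bigr)-\bigl(\dim P-d\bigr)=d-e\le d,
\end{equation*}
which is the claimed bound \eqref{inequality-geometric-version}.

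I do not expect a serious obstacle here: once $\overline X_\xi$ is identified with $f_\Omega^{-1}(s')$, the estimate is a two-line consequence of Krull's theorem and the behaviour of embedding dimension under cutting by elements. The only steps demanding care are the bookkeeping in the fibre-product identification — in particular verifying that $\Spec\Omega\xrightarrow{s'}S_\Omega\to S$ really is the map $\xi\hookrightarrow S$ used to form the generic fibre — and the standard facts (used via the smoothness hypotheses, $k$ perfect being the comfortable setting) that $X_\Omega$ and $S_\Omega$ are regular and that $\OO_{S_\Omega,s'}$ has dimension exactly $d$. A finer analysis of the quantity $e$, relating it to the imperfection of $\kappa$ (so that the bound is really governed by something like $\pdeg\kappa$), would refine the picture but is unnecessary for the stated inequality.
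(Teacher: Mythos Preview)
Your argument is correct, and it takes a genuinely different route from the paper's. The paper proves the theorem by developing the notion of \emph{embedding jump} under purely inseparable extensions: a key lemma (Lemma~\ref{lemma-ejump-bound}) bounds $\ejump_{k'/k}(R)$ by $\edim(\kappa\otimes_k k')$, and this is combined with Schr\"oer's computation of $\edim(K\otimes_k k^{1/p})$ in terms of $p$-degree minus transcendence degree (Prop.~\ref{proposition-schroeer}) to obtain the sharper bound $\ejump_{k'/k}(x)\le\pdeg(\kappa(x)/k)-\trdeg(\kappa(x)/k)$ (Thm.~\ref{theorem-max-embedding-dim}), from which the corollary over a perfect base follows using $\pdeg=\trdeg$ over perfect fields. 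Your approach instead base-changes the entire morphism $f$ along $k\hookrightarrow\Omega=\bar\kappa$, so that the geometric generic fibre becomes the fibre of $f_\Omega$ over a \emph{closed} $\Omega$-rational point $s'\in S_\Omega$; this reduces the statement to exactly the ``easy'' special-fibre computation the paper records in the Remark following the theorem, and the bound then drops out of Krull's height theorem applied to the $d$ generators of $\mathfrak m_{s'}$ inside the regular local ring $\OO_{X_\Omega,\bar x}$. Your proof is shorter and more elementary for the stated inequality; the paper's machinery, however, yields the refined residue-field bound of Theorem~\ref{theorem-max-embedding-dim} and underpins the later height-one results (Prop.~\ref{proposition-embedding-jump}, Cor.~\ref{corollary-embedding-jump-height-one}), which your argument does not directly give. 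One small cosmetic point: in your first paragraph the point $x\in X_\xi$ \emph{is} the point $x'\in X$ (since $\OO_{S,\xi}=\kappa$), so ``a localization of $\OO_{X,x'}$'' is in fact an equality.
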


\begin{remark} 
The bound on embedding
codimension asserted in Theorem~\ref{theorem-intro-main}
is immediate for \emph{special} fibers.  This is because the geometric
fiber $\overline{X}_s := X_s \times_{\kappa(s)} \bar k$ over any
closed
point $s \in S$ embeds via a closed
immersion into the
smooth variety  $\overline{X} := X \times_k \bar k$, so it follows
that 
% for any $x\in X_s$ the ring $\OO_{X,x}$
%surjects onto $\OO_{X_s,x}$ and
 $\edim_{\overline{X}_s}(\bar x) \leq \edim_{\overline{X}}(\bar x)$
 for all $\bar x \in \overline{X}_s$, and
 consequently that
\begin{align*}
\ecodim_{\overline{X}_s}(\bar x) & = \edim_{\overline{X}_s}(\bar x)
 - \dim(\OO_{\overline{X}_s,\bar x})\\
                 & \leq \edim_{\overline{X}}(\bar x)
                 - \dim(\OO_{\overline{X}_s,\bar x})\\ 
& = \dim(\OO_{\overline{X},\bar x}) -  \dim(\OO_{\overline{X}_s,\bar x})\\
& = \dim(\overline{X}) - \dim(\overline{X}_s)\\
& \leq \dim(S).
\end{align*}
%
%\edim_{X_s}(x) & \leq \dim(X)\\
%& \leq \dim(X_s) + \dim(S).
%\end{align*}
The content of the theorem is
 that
this inequality, which easily holds for all special fibers, also
 holds for the generic fiber.  
\end{remark}
%We remark that our main theorem is
% slightly more general, phrased in
%terms varieties over imperfect fields, and the bound relates the change
%in embedding dimension to
%the degree of inseparability of a field extension.
%%% CITE %%%

\subsection*{Main application}

Our primary
application of the above theorem is in the setting of the minimal
model program, where one studies a higher-dimensional 
variety via its morphisms to simpler varieties.
A primary goal in the program is to construct,  from a given variety 
$X$,
a minimal model by contracting each extremal curve $C
\subseteq X$ that pairs 
negatively with the canonical divisor in $X$. 
%, where $f$ and $X'$ are determined (if they exist) by
%the property that only curves mapped to points are those whose
%numerical classes are in the extremal ray of the effective cone
%containing the class of the given curve $C$. 
If the curve $C$ is
sufficiently mobile in $X$, then this contraction morphism may not be
birational, and 
instead may be a fibration by Fano schemes.
%  In positive
%characteristic, it is possible that all fibers are singular. In
%this case, an understanding of such $X$ 
%is reduced to the study of the generically singular
%morphism $f: X \to X'$.

In positive characteristic, Koll\'ar
%%% CITE%%%
demonstrated the
existence of 
these contraction morphisms on smooth $3$-folds $X$, extending a result
of Mori from characteristic $0$ (cf.~\cite{kol0},\cite{mor}). 
Furthermore, he gives a detailed classification of the geometry of the
possible contractions $f: X \to X'$ in the case where $f$ is
birational (i.e.~when $X'$ is a $3$-fold).
If $X'$ is a surface then
$f$ is simply a conic bundle, but 
%, and when $\dim X' = 0$, $X$ is a Fano
%variety.
%In the
%remaining 
%case,
if $X'$ is a curve then $f$ is a fibration
by del Pezzo surface schemes, and Koll\'ar remarks that the geometry
here could potentially be rather complicated.  He raises the
question of whether 
the geometric generic fibers of such $f$ can be non-normal
(cf.~\cite[Rem.~1.2]{kol0}) 
and if so, could the the generic fiber $Y$ of a del Pezzo surface
fibration satisfy $H^1(Y, \OO_Y) \neq 0$
(cf.~\cite[Rem.~5.7.1]{kol1}).
%have singularities in
%codimension $1$ (i.e.~be a non-normal del Pezzo surface); 
Over a perfect field, all normal del Pezzo surfaces $Y$ satisfy
$H^1(Y,\OO_Y) =0$ by a result of Hidaka and Watanabe
(cf.~\cite[Cor.~2.5]{hid-wat1}), 
although in positive characteristic $p > 0$, 
Reid exhibits 
non-normal del Pezzo surfaces  
 $Y$ with $H^1(Y,\OO_Y) \neq 0$ (cf.~\cite[\S4.4]{rei1}).

The author recently
%%% CITE %%%
constructed two projective morphisms $f: X \to S$ 
between smooth varieties of characteristic $2$ whose generic
fibers are 
regular del 
Pezzo surfaces $Y$ with $h^1(Y,\OO_Y) = 1$ (cf.~\cite{maddock-del}).
In one example, 
 $X$ is a $5$-fold\footnote{
It is actually possible to
create a similar example with $X$ a $4$-fold; the
details of which shall be included in a forthcoming paper.}
 and the geometric
generic fiber is integral but non-normal. 
  In the other example, $X$ is a
 $6$-fold and the geometric generic fiber is
  non-reduced.
It remains an open question whether del Pezzo surfaces $Y$ with
$H^1(Y, \OO_Y) \neq 0$ can arise as the
generic fiber of a morphism from a smooth $3$-fold to a curve, but it
follows from the 
main result of this paper that, at least in characteristics greater
than $3$, such geometry is not possible:\\
%%% CITE (local) %%%

\begin{corollary}
Let $f: X \to C$ be a surjective morphism between a smooth $3$-fold $X$ and a
curve $C$ over a perfect
field of characteristic $p > 3$.  If the generic fiber $Y$ is a del
Pezzo surface (i.e.~if $\omega_Y^{-1}$ is ample), then $H^1(Y, \OO_Y) = 0$.
\end{corollary}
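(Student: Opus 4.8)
The plan is to pass to the geometric generic fiber and feed it into Theorem~\ref{theorem-intro-main}. Let $\kappa := \kappa(C)$ be the function field of $C$ and set $\overline{Y} := Y \times_\kappa \bar\kappa$. Since cohomology commutes with the flat base change $\bar\kappa/\kappa$, one has $H^1(Y,\OO_Y)\otimes_\kappa \bar\kappa \cong H^1(\overline Y,\OO_{\overline Y})$, so it suffices to prove $H^1(\overline Y,\OO_{\overline Y}) = 0$. Because $X$ and $C$ are smooth and $\dim C = 1$, Theorem~\ref{theorem-intro-main} gives $\ecodim_{\overline Y}(\bar x)\le 1$ for every $\bar x\in\overline Y$; equivalently, $\overline Y$ is Zariski-locally a hypersurface, hence a local complete intersection, in particular Gorenstein and Cohen--Macaulay of pure dimension $2$. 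Its anticanonical sheaf $\omega_{\overline Y}^{-1}$ is ample, being the base change of the ample sheaf $\omega_Y^{-1}$. Thus $\overline Y$ is a Gorenstein del Pezzo surface over the algebraically closed (hence perfect) field $\bar\kappa$; moreover, by Serre duality (valid since $\overline Y$ is Cohen--Macaulay) $H^2(\overline Y,\OO_{\overline Y})\cong H^0(\overline Y,\omega_{\overline Y})^\vee = 0$ and $H^1(\overline Y,\OO_{\overline Y})\cong H^1(\overline Y,\omega_{\overline Y})^\vee$.

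If $\overline Y$ is normal, it is a normal del Pezzo surface over the perfect field $\bar\kappa$, and $H^1(\overline Y,\OO_{\overline Y}) = 0$ is precisely the theorem of Hidaka and Watanabe quoted above (\cite[Cor.~2.5]{hid-wat1}); no hypothesis on $p$ is needed in this case. The content of the corollary is therefore the non-normal (and a priori non-reduced) case, and this is where $p>3$ must enter. I would first dispose of non-reducedness: a repeated factor in the local hypersurface equations would force a positive multiple of an effective divisor to appear in an expression for the anti-ample class $\omega_{\overline Y}^{-1}$ coming from the conductor of $(\overline Y)_{\mathrm{red}}\subset\overline Y$, and chasing the purely inseparable map induced on that divisor by Frobenius, the resulting numerical identity cannot be satisfied when $p>3$. (Consistently, the author's non-reduced del Pezzo fibration example lives in characteristic $2$.)

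Now assume $\overline Y$ reduced and non-normal. Let $\pi:\overline Y^\nu\to\overline Y$ be the normalization, $\mathcal C\subseteq\OO_{\overline Y}$ the conductor, $D := V(\mathcal C)\subseteq\overline Y$ the non-normal locus --- a curve, since $\overline Y$ is $S_2$ --- and $D'\subseteq\overline Y^\nu$ the subscheme cut out by $\mathcal C\cdot\OO_{\overline Y^\nu}$. The conductor square produces a short exact sequence
\[
0 \longrightarrow \OO_{\overline Y} \longrightarrow \pi_*\OO_{\overline Y^\nu} \longrightarrow \mathcal G \longrightarrow 0, \qquad \mathcal G := \pi_*\OO_{D'}/\OO_D ,
\]
with $\mathcal G$ supported on the curve $D$. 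Taking cohomology reduces $H^1(\overline Y,\OO_{\overline Y}) = 0$ to (i) the vanishing $H^1(\overline Y^\nu,\OO_{\overline Y^\nu}) = 0$ and (ii) surjectivity of $H^0(\pi_*\OO_{\overline Y^\nu})\surject H^0(\mathcal G)$. For (i): since $\pi$ is finite, $\pi^*\omega_{\overline Y}^{-1}$ is ample, and adjunction along the conductor divisor $\mathfrak c$ on $\overline Y^\nu$ gives $\omega_{\overline Y^\nu}^{-1}=\pi^*\omega_{\overline Y}^{-1}(\mathfrak c)$ with $\mathfrak c\ge 0$, so $\omega_{\overline Y^\nu}^{-1}$ is big; combined with the hypersurface structure in codimension one, which bounds the singularities of $\overline Y^\nu$, and with the rationality forced on such surfaces, one concludes $H^1(\OO_{\overline Y^\nu}) = 0$ and that each component of $D'$ is a smooth rational curve meeting $\pi^*\omega_{\overline Y}^{-1}$ positively. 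For (ii): $\mathcal G$ encodes the gluing of $\overline Y^\nu$ along the finite map $D'\to D$ that recreates $\overline Y$, and failure of $H^0(\pi_*\OO_{\overline Y^\nu})\surject H^0(\mathcal G)$ would force $D'\to D$ to be inseparable of $p$-power degree with a tightly constrained ramification profile; for the hypersurface singularities permitted by $\ecodim_{\overline Y}\le 1$ this is exactly the configuration classified by Reid (\cite[\S4.4]{rei1}), which occurs only for $p\in\{2,3\}$. Hence for $p>3$ both (i) and (ii) hold and $H^1(\overline Y,\OO_{\overline Y}) = 0$, as desired.

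The main obstacle is step (ii) in the non-normal case: proving that $\overline Y$ being a \emph{hypersurface} singularity in codimension one together with $p>3$ forces the gluing map $D'\to D$ to be tame enough that $H^0(\pi_*\OO_{\overline Y^\nu})\surject H^0(\mathcal G)$ --- this is the non-normal analogue of the Hidaka--Watanabe vanishing, and it is the only point at which both the embedding-codimension bound of Theorem~\ref{theorem-intro-main} (without it $\overline Y$ would merely be $S_2$, which does not suffice) and the characteristic hypothesis $p>3$ are genuinely used.
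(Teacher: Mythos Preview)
Your overall strategy---pass to $\overline Y$, invoke the embedding-codimension bound, then split into normal/non-normal---is reasonable, and both you and the paper ultimately appeal to Reid's classification of non-normal del Pezzo surfaces. But the paper's argument is considerably more direct and avoids the detour through the conductor square entirely. The paper simply quotes from Reid that, in characteristic $p>3$, an integral non-normal del Pezzo surface with $H^1(\OO)\neq 0$ must contain a point of embedding dimension equal to $p$; the bound from Corollary~\ref{corollary-generic-fiber-embedding-dimension} gives $\edim_{\overline Y}(\bar x)\le \dim(C)+2=3<p$ at every point, and this finishes the proof in one line. (Geometric reducedness when $d\le 1$ is handled by citing Schr\"oer, not by a separate argument.) Thus the paper uses the \emph{numerical} embedding-dimension bound against Reid's numerical criterion, rather than only the qualitative ``locally a hypersurface'' consequence that you extract.

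Your route has genuine gaps. The non-reducedness paragraph is a sketch at best; the claim about Frobenius and the conductor is not an argument, and in any case is unnecessary once you note that $\overline Y$ is Cohen--Macaulay (you already observed this) and that Schr\"oer's strict inequality at the generic point makes $\overline Y$ generically reduced, hence reduced. More seriously, your step~(i) asserts $H^1(\overline Y^\nu,\OO_{\overline Y^\nu})=0$ by invoking ``rationality forced on such surfaces,'' but bigness of $\omega_{\overline Y^\nu}^{-1}$ does not by itself yield either rationality or $H^1$-vanishing in positive characteristic; this would need its own argument. And since your step~(ii) invokes Reid anyway, the whole conductor analysis gains nothing over the paper's direct citation. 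The fix is to drop the conductor square and use the sharper statement from Reid together with the full embedding-dimension bound, exactly as the paper does.
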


\subsection*{Connections to the literature}

Our main theorem is related to one of Schr\"oer
(cf.~\cite[Cor.~2.4]{sch-on}) 
which asserts that, in the case of a proper fibration  $f:
X \to S$, the inequality
\eqref{inequality-geometric-version} is strict if
$x \in \overline{X}_{\xi}$ is the generic point:

\begin{theorem}[Schr\"oer]
Let $f: X \to S$ be a proper morphism between integral
normal algebraic 
$k$-schemes of positive dimension satisfying $f_\ast(\OO_X) = \OO_S$, and
let $\xi \in S$ denote the generic point.  Then the geometric generic
embedding dimension of $X_{\xi}$ (i.e.~the embedding codimension of
the generic point of $\overline{X}_{\xi}$) is strictly less than $\dim(S)$.
\end{theorem}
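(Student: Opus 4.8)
The plan is to reduce the statement to a field‑theoretic fact about the function field $K$ of $X$ and the function field $\kappa$ of $S$ (so $\kappa\subseteq K$), by computing the embedding codimension at the generic point of $\overline{X}_\xi$ with Kähler differentials, and then to squeeze out the \emph{strict} inequality from the hypothesis $f_\ast\OO_X=\OO_S$. Let $\bar\eta$ be a generic point of $\overline{X}_\xi$ and put $A:=\OO_{\overline{X}_\xi,\bar\eta}$, an Artinian local $\bar\kappa$‑algebra with residue field $L$; the quantity to bound is $e:=\ecodim_{\overline{X}_\xi}(\bar\eta)=\dim_L(\mathfrak{m}_A/\mathfrak{m}_A^2)$. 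Since Kähler differentials commute with base change and localization, $\Omega_{A/\bar\kappa}\cong\Omega_{K/\kappa}\otimes_K A$, so $\dim_L(\Omega_{A/\bar\kappa}\otimes_A L)=\dim_K\Omega_{K/\kappa}$; and as $\bar\kappa$ is perfect, $L/\bar\kappa$ is separable, whence the conormal sequence $0\to\mathfrak{m}_A/\mathfrak{m}_A^2\to\Omega_{A/\bar\kappa}\otimes_A L\to\Omega_{L/\bar\kappa}\to 0$ is short exact and gives $\dim_K\Omega_{K/\kappa}=e+\trdeg(L/\bar\kappa)=e+\trdeg(K/\kappa)$. On the other hand $K/k$ and $\kappa/k$ are separably generated ($X$ and $S$ being integral algebraic schemes over the perfect $k$), so $\dim_K\Omega_{K/k}=\trdeg(K/k)$; substituting this into the first fundamental sequence $\Omega_{\kappa/k}\otimes_\kappa K\xrightarrow{\ \alpha\ }\Omega_{K/k}\to\Omega_{K/\kappa}\to 0$ yields $\dim_K\Omega_{K/\kappa}=\trdeg(K/k)-r$ where $r:=\rank\alpha$. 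Combining these, and using $\trdeg(K/\kappa)=\trdeg(K/k)-\trdeg(\kappa/k)=\trdeg(K/k)-\dim S$, we obtain
\[
e\;=\;\dim S-r,\qquad 0\le r\le\dim_\kappa\Omega_{\kappa/k}=\dim S.
\]
In particular $e\le\dim S$ unconditionally (this is the content of Theorem~\ref{theorem-intro-main} at the generic point, here obtained without normality or smoothness), so everything comes down to proving $r\ge 1$.

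Next I would recognize $r=0$ as an inseparability statement. Fix a separating transcendence basis $t_1,\dots,t_d$ of $\kappa/k$, so $d=\dim S$ and the $dt_i$ span $\Omega_{\kappa/k}$; then $\alpha(dt_i)=dt_i\in\Omega_{K/k}$. Because $K/k$ is separably generated over the perfect $k$, the kernel of $d\colon K\to\Omega_{K/k}$ is exactly $K^p$; and differentiating the separable algebraic relation that defines $\kappa$ over $k(t_1,\dots,t_d)$ shows that $d$ annihilates all of $\kappa$ once it annihilates each $t_i$. Hence $r=0$ if and only if $\kappa\subseteq K^p$.

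Then I would exclude $\kappa\subseteq K^p$ using the remaining hypotheses. The generic fiber $X_\xi$ is a localization of the normal scheme $X$, hence is normal, so $\OO_{X_\xi}(X_\xi)$ is integrally closed in $K$; and $\OO_{X_\xi}(X_\xi)=(f_\ast\OO_X)_\xi=\kappa$ by hypothesis (properness is what makes $f_\ast\OO_X$ coherent, so that the condition is meaningful). If $\kappa\subseteq K^p$ then $\kappa^{1/p}\subseteq K$, and every element of $\kappa^{1/p}$ is purely inseparable—hence integral—over $\kappa=\OO_{X_\xi}(X_\xi)$, so it already lies in $\kappa$. Thus $\kappa^{1/p}=\kappa$, i.e.\ $\kappa$ is perfect; but $\kappa$ is finitely generated over $k$ of transcendence degree $d=\dim S\ge 1$ (for instance $t_1\notin\kappa^p$), a contradiction. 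Therefore $r\ge 1$ and $e=\dim S-r\le\dim S-1<\dim S$, as desired.

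The delicate point is the first paragraph: one must control the possibly non‑reduced Artinian local ring $A$ at a generic point of $\overline{X}_\xi$—in particular establish the short‑exactness of the conormal sequence, which is precisely where separability of $L/\bar\kappa$ (hence perfectness of $\bar\kappa$) is used—and check that the computation is independent of the chosen generic point; the hypothesis $f_\ast\OO_X=\OO_S$ makes $\kappa$ algebraically closed in $K$, so $X_\xi$ is geometrically connected, which renders the last point harmless. By comparison the reduction $r=0\iff\kappa\subseteq K^p$ and the normality argument yielding the contradiction are routine.
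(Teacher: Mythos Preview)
The paper does not give its own proof of this theorem; it is quoted from Schr\"oer \cite[Cor.~2.4]{sch-on} as background for the paper's results, so there is no in-paper argument to compare against directly.

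Your argument is sound. The identity $e=\dim_K\Omega_{K/\kappa}-\trdeg(K/\kappa)$ that you extract from the split conormal sequence over the algebraically closed field $\bar\kappa$ is exactly Schr\"oer's Proposition~\ref{proposition-schroeer} (the $p$-degree formula for the embedding dimension of $K\otimes_\kappa k'$), and your derivation of it is the standard one. Rewriting this as $e=\dim S-r$ via the first fundamental sequence, and then excluding $r=0$ by showing it would force $\kappa^{1/p}\subseteq K$---which normality together with $f_\ast\OO_X=\OO_S$ forbids unless $\kappa$ is perfect---is a clean way to obtain the strict inequality. This is in the same spirit as the paper's own non-strict bound (Corollary~\ref{corollary-generic-fiber-embedding-dimension}), with the extra step isolating precisely where the hypotheses of properness, normality, and $f_\ast\OO_X=\OO_S$ enter.

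One small caveat: you take $k$ to be perfect, which you need for the identifications $\dim_K\Omega_{K/k}=\trdeg(K/k)$ and $\ker(d\colon K\to\Omega_{K/k})=K^p$; the theorem as stated here does not impose this. Without perfectness one only gets $\ker d=k(K^p)$, and the implication $r=0\Rightarrow\kappa\subseteq K^p$ requires adjustment. This is harmless in the context of the present paper, where $k$ is perfect throughout the main results, but it is worth flagging.
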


  In the same work, Schr\"oer observes that a
$k$-scheme $X$ is geometrically reduced (i.e. $X_{\bar k}$ is reduced)
if and only if the base change $X_{k^{1/p}}$ of $X$ by the height $1$ 
field
extension $k \subseteq k^{1/p}$ is reduced.  The analogous property
for geometric regularity is a well-known result of EGA
(cf.~\cite[Thm.~IV.0.22.5.8]{EGA}).  We refine 
this result by proving:\\

\begin{proposition}
 Let $k$ denote a field of
characteristic $p > 0$ and let $x \in X$ denote a point in a $k$-variety 
 $X$.  If $x' \in X_{k^{1/p}}$ and
 $x^{(\infty)} \in X_{k^{1/p^\infty}}$ denote the
 preimages of $x$ under 
 the natural bijections $X_{k^{1/p^\infty}} \to X_{k^{1/p}} \to X$,
then 
$$\edim
_{X_{k^{1/p}}}
(x') = \edim
_{X_{k^{1/p^\infty}}}
(x^{(\infty)}).$$
\end{proposition}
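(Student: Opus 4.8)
The plan is to reduce the equality of embedding dimensions to a stabilization statement about the Frobenius tower and to control the relevant cotangent spaces directly. Write $k_n := k^{1/p^n}$, so that $k_\infty = \bigcup_n k_n = k^{1/p^\infty}$ is the perfect closure of $k$, and let $x^{(n)} \in X_{k_n}$ denote the unique point over $x$ (the transition maps are universal homeomorphisms, hence bijective on points, as in the discussion preceding the statement). Since $\edim$ of a Noetherian local ring $(A,\mathfrak m)$ equals $\dim_{A/\mathfrak m}(\mathfrak m/\mathfrak m^2)$, and since this quantity can only increase under the faithfully flat local maps $\OO_{X_{k_n},x^{(n)}} \to \OO_{X_{k_{n+1}},x^{(n+1)}}$ (an extension of a minimal generating set of the maximal ideal remains a generating set), the sequence $e_n := \edim_{X_{k_n}}(x^{(n)})$ is nondecreasing. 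The first step is therefore to establish that $e_n$ is bounded above, uniformly in $n$; an obvious bound is $e_n \le \dim_{\kappa(x)} \Omega_{X/k, x} \otimes \kappa(x) + \dim \OO_{X,x}$ or, more simply, the bound coming from the fact that $X_{k_n} \to X$ is integral and the fiber ring $\kappa(x) \otimes_k k_n$ over the residue field is Artinian local of bounded length once we pass far enough. Once $(e_n)$ is a bounded nondecreasing sequence of integers it stabilizes, say $e_n = e_\infty$ for $n \ge N$; and since $\OO_{X_{k_\infty}, x^{(\infty)}}$ is the filtered colimit of the $\OO_{X_{k_n}, x^{(n)}}$, its maximal ideal modulo its square is the colimit of the $\mathfrak m_n/\mathfrak m_n^2$ base-changed to the (stabilizing) residue field, giving $\edim_{X_{k_\infty}}(x^{(\infty)}) = e_\infty = e_1$ provided we also know $e_1 = e_\infty$.

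The heart of the matter is thus to show the sequence $(e_n)$ is in fact \emph{constant}, i.e. that it never strictly increases, which forces $e_1 = e_\infty$. For this I would work with the conormal description of the embedding dimension. Fixing a local model, choose an affine neighborhood $\Spec A$ of $x$ in $X$ with $A$ a finitely generated $k$-algebra; then $\OO_{X_{k_n}, x^{(n)}}$ is a localization of $A \otimes_k k_n$. The point is that passing from $k_n$ to $k_{n+1}$ is a height-one purely inseparable extension, realized (after possibly enlarging to a finite sub-extension, since only finitely many elements of $k$ are involved in any local computation) by adjoining $p$-th roots of finitely many elements $a_1, \dots, a_r \in k_n$. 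Adjoining a single $p$-th root $t = a^{1/p}$ amounts to forming $k_n[T]/(T^p - a)$, and base-changing $A$ by this is $A[T]/(T^p - a)$, whose spectrum maps to $\Spec A$ by a finite flat morphism of degree $p$ that is a homeomorphism. On cotangent spaces, $d(T^p - a) = -da$ in characteristic $p$, so the new relation contributes $da$ to the conormal space; the key algebraic input is that one can choose a minimal set of such generators of $k_{n+1}$ over $k_n$ whose differentials $da_i$ are linearly independent in $\Omega_{k_n/k}\otimes \kappa$-type spaces, and that these differentials are already "used up" by generators of the maximal ideal at $x^{(n)}$ — more precisely, that the extra Frobenius direction does not enlarge $\mathfrak m/\mathfrak m^2$ because it was already accounted for in the previous stage. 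Making this precise is exactly the content of the EGA-style analysis: the maps $X_{k_{n+1}} \to X_{k_n}$ are "relatively perfect enough" that the relative cotangent complex contribution vanishes after one step.

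The main obstacle I anticipate is precisely controlling that single step $e_n = e_{n+1}$, because a priori base change by a height-one inseparable extension \emph{can} raise embedding dimension — that is the whole phenomenon the paper studies — so the claim must genuinely use that we are in the Frobenius tower of the \emph{ground field} and not extending by an arbitrary inseparable extension. The clean way to handle this, which I would pursue, is to use the fact that $k_\infty/k_n$ is a colimit of copies of the absolute Frobenius on (a finite subfield of) $k$, so that $X_{k_\infty} \to X_{k_n}$ factors through a power of the relative Frobenius $F_{X_{k_n}/k_n}$: the relative Frobenius of a variety over a field has the property that its effect on the maximal ideal at a point is to replace $\mathfrak m$ by its image, which does not change the number of generators of $\mathfrak m/\mathfrak m^2$ after the first application because $\OO/\mathfrak m^{[p]}$ already stabilizes the relevant quotient. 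Concretely, I would show that for $n \ge 1$ the local ring $\OO_{X_{k_n}, x^{(n)}}$ and $\OO_{X_{k_1}, x^{(1)}}$ have the same minimal number of generators of their maximal ideals by exhibiting an explicit surjection of their associated graded pieces in degree one, using that a generating set of $\mathfrak m_1$ pulls back to a generating set of $\mathfrak m_n$ (flatness) and, conversely, that any generator of $\mathfrak m_n$ can be written, modulo $\mathfrak m_n^2$, in terms of pullbacks from level $1$ together with elements of $k_n$ whose differentials already lie in the span of the level-$1$ generators' differentials — the "already used up" claim above — whence no new generators are needed. This last verification, tracking differentials of constants through iterated Frobenius, is the step I'd budget the most care for; everything else is the formal colimit argument of the first paragraph.
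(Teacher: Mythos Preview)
Your framework is sound: the sequence $e_n := \edim_{X_{k_n}}(x^{(n)})$ is nondecreasing, and a colimit argument identifies the limit with the embedding dimension over $k^{1/p^\infty}$. But the entire content of the proposition lies in the claim that $e_1 = e_n$ for all $n \geq 1$, and here your proposal does not supply an argument. The two mechanisms you sketch both fall short. The relative Frobenius idea is not correctly formulated: $F_{X_{k_n}/k_n}$ is a morphism $X_{k_n} \to (X_{k_n})^{(p)}$, and while $(X_{k_n})^{(p)}$ is abstractly isomorphic to a base change of $X_{k_n}$, it is \emph{not} $X_{k_{n+1}}$; unwinding the identifications gives $(X_{k_n})^{(p)} \cong X_{k_n} \times_{k_n^p} k_n$, which is a different object. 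So the asserted factorization of $X_{k_\infty} \to X_{k_n}$ through powers of relative Frobenius does not hold as stated, and the claim that relative Frobenius ``does not change the number of generators of $\mathfrak m/\mathfrak m^2$ after the first application'' is neither precise nor justified. The ``differentials already used up'' heuristic is likewise not an argument: a height-one purely inseparable base change \emph{can} raise embedding dimension, and you give no concrete reason why the specific extension $k_{n+1}/k_n$ applied to the specific ring $\OO_{X_{k_n},x^{(n)}}$ should not. You yourself flag this as the step needing the most care, and indeed it is the whole proof.

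The paper's approach is substantively different and avoids these difficulties. Rather than working with the full extensions $k^{1/p^n}$, it first reduces (by Noetherianity and an easy induction on the number of generators) to a tower $k_n = k(t^{1/p^n})$ obtained by adjoining $p^n$-th roots of a \emph{single} element $t \in k \setminus k^p$. The payoff is that $\kappa(x) \otimes_k k_n$ is then generated by one element over $\kappa(x)$, so the main lemma of the paper (bounding $\ejump_{k'/k}(R)$ by $\edim(\kappa \otimes_k k')$) forces $\ejump_{k_n/k}(R) \leq 1$. This immediately dispatches the case $\ejump_{k_1/k}(R) = 1$, and the remaining case $\ejump_{k_1/k}(R) = 0$ is handled by an explicit case analysis on the residue fields $K \subseteq K_1 \subseteq K_2$ (each step of degree $1$ or $p$), together with a direct computation of $K \otimes_k k_n$ that tracks how the nilpotent $\epsilon_n = t^{1/p^n}\otimes 1 - 1\otimes t^{1/p^n}$ behaves. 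The reduction to a rank-one tower is the key idea you are missing; without it one is forced to control many new directions simultaneously, which is exactly where your proposal becomes vague.
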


\section{Regularity and smoothness}
\label{section-defs}
\setcounter{equation}{0}

%\subsection{Regularity and smoothness}

We briefly recall the definitions of
% ---
% regularity and smoothness, 
%and the relation between ---
the notions of regularity and smoothness.

\begin{definition}
  The \emph{embedding dimension} of a locally Noetherian scheme $X$ at
  a point $x \in X$ is
  the embedding dimension  of the local ring
  $\OO_{X,x}$ at the maximal ideal $\mathfrak m_x$, that is,
  the dimension of
  the Zariski cotangent space over the residue
  field $\kappa(x):= \OO_{X,x}/\mathfrak
  m_x$,
  $$\edim_X(x) = \edim(\OO_{X,x}) = \dim_{\kappa(x)} \mathfrak m_x/
  \mathfrak m_x^2.$$
\end{definition}
\begin{definition}
  A scheme $X$ is \emph{regular} if it is locally
  Noetherian and for all $x \in X$, the local
  ring $\OO_{X,x}$ is a regular local ring
 (i.e.~the embedding dimension of $\OO_{X,x}$
  is equal to its Krull dimension).
\end{definition}
\begin{definition}\label{definition-of-smooth}
  A scheme $X$ is
  \emph{smooth} over a field $k$ if it is locally
  of finite-type and 
  geometrically regular over $k$ (i.e.~$X \times_k \bar k$ is
  regular). 
%, or equivalently,
% if the sheaf of K\"ahler differentials  $\Omega_{X/k}$ is locally free at
% $x$ of rank equal to the (Krull) dimension of $X$ at $x$.
%\end{itemize}
\end{definition}
% CITE
\begin{remark}
  Any smooth scheme is regular, and any
  regular scheme is locally integral.  Therefore, any connected,
  separated scheme of finite type over $k$ that is smooth over $k$ (or
  regular) is automatically a \emph{variety over $k$}, which for us
  refers to an
  integral, separated scheme of finite type over $k$. 

\end{remark}

Over a perfect field, the notions of regularity and smoothness are
equivalent.
% CITE
However, over imperfect fields (of positive characteristic), a scheme
may be regular but not smooth.  We 
have already seen such an example: the generic fiber of the morphism
$f:\A^2_k \to \A^1_k$ described in the introduction
%, over a field of
%characteristic $2$ (resp.~$3$), 
is regular at all points but is not
smooth since a cuspidal singularity appears after an algebraic
extension of the function field $k(t)$.
%at the geometric point
%$(\sqrt{t}, 0)$ (resp.~$(0,\sqrt[3]{t})$). 
It turns out that pretty much all examples of
regular 
%(non-smooth)
 varieties arise in this way, as generic fibers of 
morphisms between smooth varieties, and therefore the study of the
singularites appearing in the geometric 
generic fibers of morphisms between smooth varieties reduces to
the study of the singularities appearing in the geometric
(i.e.~algebraically closed) base changes
of regular varieties:

%The property of smoothness is better behaved than the property of
%regularity in the sense that it can be generalized to a relative
%property 
%%, so we can talk
%%about the smoothness of a morphism $f: X \to S$, and one 
%that is preserved under arbitrary base change.  Moreover, smoothness
%is detected by the rank of the sheaf of K\"ahler
%differentials:  
%
%\begin{proposition}
%  A morphism of schemes $X \to S$ is said to be \emph{smooth} provided
%  that the sheaf of relative K\"ahler differentials $\Omega_{X/S}$ is 
%  locally free and of constant rank
%  smooth a point $x \in X$ if the sheaf of K\"ahler
%  differentials  $\Omega_{X/k}$ is locally free at $x$ of rank equal
%  to the (Krull) dimension of $X$ at $x$.
%
%\end{proposition}

\begin{proposition}\label{prop-regular-is-generic-fiber}
  Let $Y$ be a variety over a finitely generated field extension $K$ of a
  perfect field $k$. 
  $Y$ is regular if and only if there
  exists a morphism of smooth $k$-varieties $f: X \to B$ so that 
  $K$ is the function field of $B$ and $Y$ is the generic fiber
  of $f$.
%Moreover, we may assume that $B$ is a standard \'etale
%  open subset of $\A_k^m$ for some $m \geq 0$, i.e.
%  \begin{equation}\label{eqn-standard-etale}
%    B = \Spec k[t_1,\ldots,t_m,\frac{1}{g},s]/(f),
%  \end{equation}
%  where $s, t_1,t_2, \ldots, t_m$ are algebraically independent elements over
%  $k$, $g \in k[t_1,\ldots,t_m]$, and $f \in k[t_1,\ldots, t_m, s]$
%  whose derivative $\frac {df}{ds}$ is invertible in 
%  $k[t_1,\ldots,t_m,\frac 1 g,s]/(f)$.
\end{proposition}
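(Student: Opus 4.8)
The plan is to prove both directions separately. The "if" direction is the easier one: given a morphism $f\colon X\to B$ of smooth $k$-varieties with function field $K = \kappa(B)$ and generic fiber $Y = X_{\xi}$, Theorem~\ref{theorem-intro-main} (whose statement we may invoke) already asserts that $X_{\xi}$ is a regular variety over $\kappa$. So this half is immediate once the theorem is in hand. (If one prefers a self-contained argument not relying on the main theorem, one notes that $X$ is regular and $X_\xi$ is a localization of $X$ — the local ring of $X_\xi$ at a point $y$ lying over $x\in X$ is a localization of $\mathcal{O}_{X,x}$ — hence regular; and $X_\xi$ is integral, separated, of finite type over $K$, hence a variety over $K$.)

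For the "only if" direction, the idea is to realize $Y$ as a generic fiber by "spreading out." Write $Y = \Spec$-locally as cut out inside some affine space by finitely many polynomials with coefficients in $K$. Since $K$ is finitely generated over the perfect field $k$, choose a transcendence basis and realize $K$ as the function field of some integral affine $k$-variety $B_0$; after shrinking $B_0$ we may assume $B_0$ is smooth over $k$, since the smooth locus of a variety over a perfect field is dense and open. Now the finitely many polynomials defining $Y$ (and a finite affine cover of $Y$, and the gluing data) have coefficients lying in $K = \kappa(B_0)$, hence in $\mathcal{O}_{B_0}(U)$ for some dense open $U\subseteq B_0$; this produces a finite-type $U$-scheme $X_0 \to U$ whose generic fiber is (isomorphic to) $Y$. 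The key point is then to shrink $U$ further so that $X_0$ becomes smooth over $k$: because $Y$ is regular, and regularity/smoothness is an open condition that propagates — the regular locus of $X_0$ contains the generic fiber, and over a perfect base $k$ a variety is regular iff smooth over $k$ — one can find a dense open $X \subseteq X_0$, still dominating a dense open $B\subseteq U$, on which $X$ is smooth over $k$. Setting $f\colon X\to B$ to be the restriction gives a morphism of smooth $k$-varieties with $\kappa(B) = K$ and generic fiber $Y$.

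The main obstacle is the last shrinking step: we must guarantee that after discarding the non-smooth (equivalently, over $k$, non-regular) locus of $X_0$ we have not thrown away any point of the generic fiber, and that what remains still surjects onto an open subset of $B_0$. The first part holds because $Y$, being regular over $K$ with $k$ perfect, is actually smooth over $k$ (regular $+$ perfect base $\Rightarrow$ smooth, since the residue field extensions are automatically separable), so every point of $Y$ lies in the smooth-over-$k$ locus of $X_0$; as that locus is open and contains the generic fiber, its complement is a closed set not meeting the generic fiber, hence its image in $B_0$ is a constructible set not containing the generic point $\xi$, so it is contained in a proper closed subset. Removing the preimage of that closed subset (intersected with the smooth locus) yields the desired $X$ flat — or at least dominant — over a dense open $B$. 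One should be slightly careful that $X$ remains connected/integral so as to genuinely be a variety; shrinking to the connected component containing the generic point handles this, and the generic fiber is unchanged.
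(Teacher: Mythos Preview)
Your argument is correct. The paper itself does not prove this proposition at all: it simply cites \cite[Prop.~1.6]{sch2} (Schr\"oer). What you have written is the standard spreading-out proof, and it is essentially what one finds in Schr\"oer's paper as well.

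Two small comments. First, invoking Theorem~\ref{theorem-intro-main} for the ``if'' direction is logically permissible here since Proposition~\ref{prop-regular-is-generic-fiber} is never used in the proof of the main theorem, but your parenthetical self-contained argument (local rings of $X_\xi$ are localizations of the regular local rings of $X$) is cleaner and avoids any appearance of forward reference. Second, in the ``only if'' direction your key observation---that for $y\in Y\subseteq X_0$ lying over the generic point of $B_0$ one has $\mathcal{O}_{X_0,y}=\mathcal{O}_{Y,y}$, hence regular, hence smooth over the perfect field $k$---is exactly right, and the subsequent Chevalley/constructibility step to shrink the base is the standard maneuver. The integrality cleanup at the end (passing to the connected component of the smooth locus containing $Y$) is also correct, since $Y$ is irreducible and a smooth $k$-scheme has integral connected components.
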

\begin{proof}
  See \cite[Prop.~1.6]{sch2}.
% or
%  the proof of \cite[Prop.~2.1.4]{maddock-del}. 
\end{proof}

%It follows from the proposition that
% we see that the existence of
%regular, non-smooth 
%varieties over finitely 
%generated imperfect fields is precisely the same phenomenon as 
%the existence morphisms between smooth varieties with singularities in
%all fibers.
%Therefore,
Notice that if $X$ is a regular variety that is not smooth over $k$, then 
then there exists a closed
point $\bar x \in \overline X:= X \times_k \bar k$ sitting over some point
$x \in X$
such that 
\begin{align*}
\edim_{\overline{X}}(\bar x)  > \dim(\overline{X})
& = \dim (X) \\
 & = \edim_{X}(x).
\end{align*}
In this way, the existence of
regular but non-smooth schemes is directly linked to 
``jumps'' in embedding dimension after a geometric extension of
scalars $\bar k /k$.  

%In the next 
%section, we begin our study of latter phenomenon to obtain information on the
%first.  In particular, we prove a bound on the embedding dimension of
%singularities in $X \times_k \bar k$ for regular $k$-schemes $X$ in
%terms of the degree of imperfection of the field $k$.

\section{Jumps in embedding dimension}
\label{section-jumps}
\setcounter{equation}{0}

For any purely inseparable field extension $k'/k$, the morphism of
affine schemes $\Spec k' \to \Spec k$ is a universal homeomorphism.
In particular, any $k$-algebra $R$ is local if and only if $R
\otimes_k k'$ is so, which shows the following definition to be
well-formed: 

\begin{definition}
  Let $R$ be a local Noetherian $k$-algebra and  $k'/k$
  a purely inseparable field extension.  We define the
  \emph{embedding jump} of $R$ over the extension $k'/k$ to be the difference
  between the embedding dimensions
% of $R$ at its closed point 
%  and $X' := \Spec R\otimes_k k'$ at its closed point $x'$:
  $$\ejump_{k'/k}(R) := \edim(R\otimes_k k') - \edim(R).$$ 
  The \emph{embedding jump} $\ejump_{k'/k}(x)$ of a scheme $X$ at a point $x
  \in X$ is defined by $\ejump_{k'/k}(x) := \ejump_{k'/k}(
  \OO_{X,x})$. 
\end{definition}

% CITE
%because of the following basic lemma:
%
%\begin{lemma}
%  Let $R$ be a local $k$-algebra with maximal
%  ideal $\mathfrak m$.
%%/k$ be a finitely generated extension of fields.
%%% and $R$ a local noetherian $k$-algebra with
%%  maximal ideal $\mathfrak m$ and residue field
%%  $\kappa = R/\mathfrak m$.  
%  If $k'/k$ is a purely inseparable extension, then $R
%  \otimes_k k'$ is an Artin local ring.
%\end{lemma}
%\begin{proof}
%  Since $\Spec k' \to \Spec k$ is a universal homeomorphism, $\Spec R
%  \otimes_k k' \to \Spec R$ is a homeomorphism.  Therefore, 
%  $R \otimes_k k'$ also has only one prime ideal.  
%  
%% FIX THIS!  
%  Clearly $\mathfrak m R' \neq R'$, so let $\mathfrak n$ be any
%  maximal ideal in $R'$ containing $\mathfrak m$.  
%  Since $k'/k$ is a purely inseparable extension, any
%  element of $f \in R \otimes_k k'$ satisfies $f^{p^n} \in R$ for some
%  $n > 0$.  Since $R$ is Artin local, this implies that $f$ is either
%  a unit or $f^{p^n} \in \mathfrak m$. 
%
%, and hence $R \otimes_k k'$ has only
%  one prime ideal equal to the nilradical.  Thus,
%  $K \otimes_k k'$ is an Artin  local ring, finitely generated
%  over $k'$. 
%\end{proof}

\begin{remark}\label{remark-jumps-are-nonnegative}
We make two easy observations about embedding jumps:
\begin{enumerate}
\item Embedding jumps are non-negative
  (cf.~\cite[0.IV.22.5.2.1]{EGA}).
\item Because $R$ is Noetherian, any purely inseparable
  field extension $k'/k$ admits some finite
  sub-extension $k \subseteq k''$ for which
  $\ejump_{k'/k}(R) = \ejump_{k''/k}(R)$.
\end{enumerate}
\end{remark}

In the special case that $X = \Spec K = \{x \}$, for a field extension 
$K/k$, the embedding dimension $\edim_X(x)$ is zero and so
%and $x\in X$ the unique point,  
the embedding jump is simply the embedding dimension of the Artin
local ring $K \otimes_k k'$,
$$\ejump_{k'/k}(x) = \edim(K \otimes_k k').$$
This quantity was studied by Schr\"oer in
\cite[Prop.~2.1]{sch-on}, where he proved the following theorem,
which implies the $0$-dimensional case of our main
result (cf.~Thm.~\ref{theorem-max-embedding-dim}):

\begin{proposition}[Schr\"oer]\label{proposition-schroeer}
Let $K/k$ be an extension of fields of  characteristic $p > 0$, and
let $k'/k$ be a field extension that contains 
$k^{1/p}$.  Then the embedding dimension 
of $K \otimes_k k'$ equals that of $K \otimes_k k^{1/p}$, which also
equals the difference between 
the $p$-degree and the transcendence degree of the field extension
$K/k$. 
\end{proposition}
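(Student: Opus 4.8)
The plan is to reduce to the case $k' = k^{1/p}$, then to compute the embedding dimension of the Artinian local ring $R := K \otimes_k k^{1/p}$ by identifying its cotangent space with the ``module of imperfection'' of the extension $K/k$, whose dimension is the classical quantity $\pdeg(K/k) - \trdeg(K/k)$.

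For the reduction, the inclusion $k^{1/p} \subseteq k'$ yields $K \otimes_k k' = R \otimes_{k^{1/p}} k'$, so $\edim(K \otimes_k k') \geq \edim(R)$ because embedding jumps are non-negative (Remark~\ref{remark-jumps-are-nonnegative}). For the reverse inequality one may, by Remark~\ref{remark-jumps-are-nonnegative}(2) and monotonicity of the embedding dimension under enlarging the (purely inseparable) base, assume $k^{1/p} \subseteq k' \subseteq k^{1/p^n}$ with $k'/k$ finite, and it then suffices to check that the minimal number of generators of the maximal ideal of $K \otimes_k k^{1/p^n}$ does not exceed that of $R$. In the explicit presentation below this holds because passing up to $k^{1/p^n}$ does not increase that number — though the residue field grows, the old maximal ideal is generated by the same count of elements, the delicate point being that a minimal generator of $\mathfrak m$ can become a $p$-power of a new generator after the enlargement. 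Making this bookkeeping precise, while correctly measuring the cotangent space over the enlarged, typically inseparable, residue field, is the step I expect to be the main obstacle.

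For the core computation I would assume $K/k$ finitely generated (harmless: $K$ is the filtered union of its finitely generated subextensions, over which all relevant quantities are finite and eventually constant, the general case being routine with cardinal dimensions). Fix a $p$-basis $\{a_i\}_{i \in B}$ of $k$ over $k^p$, so that $k^{1/p} \cong k[x_i : i \in B]/(x_i^p - a_i : i \in B)$ and hence $R \cong K[x_i : i \in B]/(x_i^p - a_i : i \in B)$, a complete intersection over $K$ whose maximal ideal $\mathfrak m$ is the kernel of the surjection $R \surject \kappa := Kk^{1/p}$ sending $x_i \mapsto a_i^{1/p}$. Let $\Upsilon_{K/k} := \ker(\Omega_{k/\mathbb{F}_p} \otimes_k K \to \Omega_{K/\mathbb{F}_p})$ denote the module of imperfection, a $K$-vector space whose nonzero elements are the $K$-linear relations $\sum_i c_i\, da_i$ that hold among the differentials $d_K a_i$ in $\Omega_{K/\mathbb{F}_p}$. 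The heart of the argument is the equality $\dim_\kappa \mathfrak m / \mathfrak m^2 = \dim_K \Upsilon_{K/k}$: each relation $\sum_i c_i\, d_K a_i = 0$ integrates modulo $K^p$ to an element of $\mathfrak m$ of the shape $x_{i_0} - (\text{a polynomial in the remaining } x_i)$; one checks that these elements generate $\mathfrak m$ modulo $\mathfrak m^2$ and that a basis of $\Upsilon_{K/k}$ yields a minimal such set, while the conormal sequence for $k \to K[x_\bullet] \to \kappa$ together with the complete-intersection presentation rules out further generators. Nakayama then gives $\edim(R) = \dim_K \Upsilon_{K/k}$.

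Finally I would invoke the classical identity $\dim_K \Upsilon_{K/k} = \pdeg(K/k) - \trdeg(K/k)$. Taking the alternating sum of $K$-dimensions in the first exact sequence of K\"ahler differentials $0 \to \Upsilon_{K/k} \to \Omega_{k/\mathbb{F}_p} \otimes_k K \to \Omega_{K/\mathbb{F}_p} \to \Omega_{K/k} \to 0$ gives $\dim_K \Upsilon_{K/k} = \pdeg(k/\mathbb{F}_p) + \pdeg(K/k) - \pdeg(K/\mathbb{F}_p)$ in terms of absolute $p$-degrees, and one has $\pdeg(K/\mathbb{F}_p) = \pdeg(k/\mathbb{F}_p) + \trdeg(K/k)$: adjoining a transcendence basis $t_1, \dots, t_d$ of $K/k$ to a $p$-basis of $k$ yields a $p$-basis of $k(t_1, \dots, t_d)$, while a finite extension such as $K/k(t_\bullet)$ leaves the absolute $p$-degree unchanged (the separable part by \'etale base change of $\Omega$, each purely inseparable step $L \subseteq L(\theta)$ with $\theta^p \in L \setminus L^p$ by swapping $\theta^p$ for $\theta$ in a $p$-basis). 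Substituting gives $\dim_K \Upsilon_{K/k} = \pdeg(K/k) - \trdeg(K/k)$, which with the core equality and the reduction completes the proof.
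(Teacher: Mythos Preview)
The paper does not give a proof of this proposition: it is quoted from Schr\"oer's paper \cite[Prop.~2.1]{sch-on} and used as a black box. So there is no ``paper's own proof'' to compare against; what follows is a brief assessment of your sketch on its own merits.

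Your overall architecture is the standard one and is essentially what underlies Schr\"oer's argument: identify the cotangent space of the Artin local ring $K\otimes_k k^{1/p}$ with (a base change of) the imperfection module $\Upsilon_{K/k}$, and then read off $\dim_K\Upsilon_{K/k}=\pdeg(K/k)-\trdeg(K/k)$ from the four--term exact sequence of absolute K\"ahler differentials together with the additivity $\pdeg(K/\mathbb F_p)=\pdeg(k/\mathbb F_p)+\trdeg(K/k)$. That part is correct, and your justification of the additivity via a transcendence basis plus swapping $\theta^p$ for $\theta$ along a tower of simple height-one steps is fine.

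Two cautions. First, the reduction from a general $k'\supseteq k^{1/p}$ to $k'=k^{1/p}$, which you yourself flag as the main obstacle, really is the delicate point, and your sketch silently assumes $k'/k$ is purely inseparable; the statement allows any $k'$ containing $k^{1/p}$ (e.g.\ $k'=\bar k$), so one should first strip off the separable part of $k'/k^{1/p}$ (harmless for embedding dimension, being \'etale on residue fields) before passing to a finite subextension inside some $k^{1/p^n}$. Second, and more importantly for the logic of this paper: the statement ``going from $k^{1/p}$ up to $k^{1/p^n}$ does not increase the minimal number of generators'' is exactly the content of Proposition~\ref{proposition-embedding-jump} and Corollary~\ref{corollary-embedding-jump-height-one}, which in the paper are proved \emph{using} Proposition~\ref{proposition-schroeer}. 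If you want a self-contained proof along your lines you must supply an independent argument for that step; otherwise you are arguing in a circle relative to the paper's development. A clean way to avoid this is to carry out the imperfection-module identification once and for all for $K\otimes_k k'$ with $k'\supseteq k^{1/p}$ arbitrary (the conormal sequence for $K\otimes_k k'\twoheadrightarrow Kk'$ still compares $\mathfrak m/\mathfrak m^2$ with $\Upsilon_{K/k}\otimes_K Kk'$), rather than first reducing to $k'=k^{1/p}$.
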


For the reader's convenience, we next recall the definition of $p$-degree.

\section{The $p$-degree of a field extension}
\label{p-degree}
\setcounter{equation}{0}

 The sheaf of K\"ahler differentials $\Omega_{X/k}$ on
a variety $X$ is a locally free $\OO_X$-module of rank equal to $\dim X$ if
and only if $X$ 
is smooth over $k$.  In characteristic $0$, the transcendence degree
of a finitely 
generated field extension $K/k$ is equal to the rank of the $K$-vector
space of K\"ahler differentials $\Omega_{K/k}$.  In characteristic
$p$, this is no longer the case,
%; e.g.~consider finite
%inseparable field 
%extensions $K := k[x]/(x^p - a)$, for $a \in k\setminus k^p$, and note
%that $0 \neq dx \in \Omega_{K/k}$ since $d(x^p - a) = 0$.  This 
suggesting
that transcendence degree is perhaps not best-suited for
discussions of smoothness.
% inseparable field extensions, so one defines the
%$p$-degree of an extension:  

\begin{definition}
Let $K/k$ be an extension of fields of
characteristic $p > 0$.   The
\emph{$p$-degree of $K/k$} is defined to
be the rank of the $K$-vector space $\Omega_{K/k}$.
\end{definition}

\begin{remark}
If $K/k$ is an arbitrary
extension of fields of characteristic $p > 0$, then
$$\pdeg(K/k) = \pdeg(K/k(K^p)),$$
where $k(K^p)$ denotes the
subfield of $K$ generated by $k$ and $K^p$.  This is because
$\Omega_{K/k} 
= \Omega_{K/k(K^p)}$, which holds since
$d(f^p) = pf^{p-1}df = 0$ for all $f \in K$. 
\end{remark}

For a finitely generated extension $K$ of a perfect field $\F$, the
notion of $p$-degree and transcendence degree actually agree,
due to the existence of a separating transcendence basis (cf.~\cite[Thms.~26.2-3]{mat-com-ring}).

\begin{proposition}\label{proposition-pdegree-over-perfect-field}
If $\F$ is a perfect field and $K$ is a finitely generated field
extension, then
$$\pdeg(K/\F) = \trdeg(K/\F).$$
% the $p$-degree of $K/\F$ is equal to the transcendence
%degree of $K/\F$.
\end{proposition}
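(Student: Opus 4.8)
The plan is to reduce to the case of a purely transcendental extension by invoking the existence of a \emph{separating} transcendence basis, which is available precisely because $\F$ is perfect and $K/\F$ is finitely generated (cf.~\cite[Thms.~26.2--3]{mat-com-ring}). Concretely, set $n := \trdeg(K/\F)$ and choose a transcendence basis $t_1, \dots, t_n$ of $K/\F$ such that $K$ is a finite \emph{separable} extension of the subfield $L := \F(t_1, \dots, t_n)$.

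The first step is to compute $\Omega_{L/\F}$. Since $L$ is purely transcendental over $\F$, it is a localization of the polynomial ring $\F[t_1, \dots, t_n]$, so $\Omega_{L/\F}$ is a free $L$-module of rank $n$ with basis $dt_1, \dots, dt_n$; in particular $\pdeg(L/\F) = n = \trdeg(L/\F)$.

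The second step is to propagate this computation along the finite separable extension $K/L$ by means of the first fundamental exact sequence of K\"ahler differentials attached to $\F \to L \to K$,
$$\Omega_{L/\F} \otimes_L K \longrightarrow \Omega_{K/\F} \longrightarrow \Omega_{K/L} \longrightarrow 0.$$
Because $K/L$ is finite and separable, $\Omega_{K/L} = 0$, and moreover the left-hand map is an isomorphism (a standard consequence of the fact that a separable algebraic extension is formally \'etale). Hence $\Omega_{K/\F} \cong \Omega_{L/\F} \otimes_L K$ is free over $K$ of rank $n$, and therefore $\pdeg(K/\F) = \dim_K \Omega_{K/\F} = n = \trdeg(K/\F)$, as claimed.

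The only substantive input is the existence of the separating transcendence basis, which we are permitted to quote; once that is in hand the remainder is a formal manipulation of modules of differentials, so I do not expect a genuine obstacle. The one point worth a line of justification is the claim that $\Omega_{L/\F} \otimes_L K \to \Omega_{K/\F}$ is an isomorphism for the separable extension $K/L$ — equivalently that $\Omega_{-/\F}$ behaves well under separable base change — which again is standard and may simply be cited from \cite{mat-com-ring}.
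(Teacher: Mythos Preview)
Your proof is correct and follows exactly the approach indicated by the paper: the paper does not spell out a proof but simply attributes the result to the existence of a separating transcendence basis, citing \cite[Thms.~26.2--3]{mat-com-ring}, and your argument is precisely the standard way to fill in those details via the fundamental exact sequence of K\"ahler differentials.
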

%\begin{proof}
%%% CITE %%%
%
%  By \cite{matsumura-commutative-algebra}, there exists a separating
%  transcendence basis $x_1,\ldots, x_n \in K$ such that $K$ is
%  separable over $k:= \F(x_1,\ldots, x_n)$.  
%%Therefore, $\Omega_{K/k}
%%  = 0$ and so
%  We claim that $K \otimes_{k} \Omega_{k/\F} \to \Omega_{K/\F}$
%%  surjects.  It is injective, hence bijective, because any the
%  is a bijection, since any $\F$-derivation $D:k \to M$ to a
%  $k$-module $M$  extends uniquely to a derivation $D_K: K \to
%  K \otimes_k M$.  
%
%  To see this,  by
%  primitive element theorem, we may 
%  assume $K = k[s]$, for some algebraic element $s$ with a separable
%  minimal polynomial $f(x) = \sum a_i x^i$. Then we
%  extend $D$ to  $D_K : K \to K \otimes_k M$ by the Leibniz rule and
%  the definition
%  $$D_K(s) := \frac{-\sum  D(a_i)s^i}{ \partial f /\partial x}.$$
%  (This extension is seen to be unique by applying a derivation to the
%  equation $f(s) = 0$.)
%\end{proof}

\section{Embedding jumps and residue fields}
\setcounter{equation}{0}
In this section we prove our main lemma that bounds the jump in
embedding dimension of a regular local noetherian ring by that of its
residue field.

\begin{lemma}\label{lemma-ejump-bound}
  Let $R$ be a local Noetherian ring, over a field $k$, with maximal
  ideal $\mathfrak m$ and residue
  field $\kappa = R / \mathfrak m$.  If $k'/k$ is a purely inseparable
  field extension,
  then
  $$\ejump_{k'/k}(R) \leq \edim(\kappa \otimes_k k').$$
\end{lemma}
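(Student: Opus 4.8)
The plan is to prove the (superficially stronger) inequality $\edim(R \otimes_k k') \le \edim(R) + \edim(\kappa \otimes_k k')$ by writing down an explicit generating set of the maximal ideal of $R \otimes_k k'$; subtracting $\edim(R)$ then yields the lemma. The first step is to reduce to the case where $k'/k$ is \emph{finite}. This is harmless: Remark~\ref{remark-jumps-are-nonnegative}(2) provides a finite subextension $k \subseteq k'' \subseteq k'$ with $\ejump_{k'/k}(R) = \ejump_{k''/k}(R)$, while Remark~\ref{remark-jumps-are-nonnegative}(1), applied to the (Artinian, hence Noetherian) local ring $\kappa \otimes_k k''$ together with the purely inseparable extension $k'/k''$, gives $\edim(\kappa \otimes_k k'') \le \edim((\kappa \otimes_k k'') \otimes_{k''} k') = \edim(\kappa \otimes_k k')$. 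So it suffices to treat finite $k'$. The real reason for this reduction is that it makes $R' := R \otimes_k k'$ a finite free $R$-module, hence local Noetherian, and makes $A := \kappa \otimes_k k' = R'/\mathfrak m R'$ a finite-dimensional local $\kappa$-algebra — so Nakayama's lemma is available for both.

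With $k'/k$ finite, I would work with the surjection $q \colon R' \twoheadrightarrow A$, which has kernel $\mathfrak m R'$ and target local with maximal ideal $\mathfrak n$; since $\Spec R' \to \Spec R$ is a universal homeomorphism, the maximal ideal $\mathfrak m'$ of $R'$ is precisely $q^{-1}(\mathfrak n)$. Now choose a minimal generating set $x_1, \dots, x_n$ of $\mathfrak m$, so that $n = \edim(R)$ and $\mathfrak m R' = (x_1, \dots, x_n)R'$, and choose elements $t_1, \dots, t_s \in R'$ whose images $\bar t_j$ in $A$ generate $\mathfrak n$ and whose further images in $\mathfrak n/\mathfrak n^2$ form a basis, so that $s = \edim(\kappa \otimes_k k')$ (here Nakayama inside $A$ gives $(\bar t_1,\dots,\bar t_s) = \mathfrak n$). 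The key point is the identity $\mathfrak m' = \mathfrak m R' + (t_1,\dots,t_s)R' = (x_1,\dots,x_n,t_1,\dots,t_s)R'$, proved by a short diagram chase through $q$: the inclusion $\supseteq$ is immediate since $q(x_i)=0$ and $q(t_j)=\bar t_j \in \mathfrak n$, and for $\subseteq$ one writes $q(y) = \sum \bar a_j \bar t_j$ for a given $y \in \mathfrak m'$, lifts the $\bar a_j$ to $a_j \in R'$, and notes that $y - \sum a_j t_j$ lies in $\ker q = \mathfrak m R'$.

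It follows that $\mathfrak m'$ is generated by $n+s$ elements, hence $\edim(R') \le n+s$, and therefore $\ejump_{k'/k}(R) = \edim(R') - \edim(R) \le s = \edim(\kappa \otimes_k k')$, which is the assertion. I do not expect any genuine obstacle: after the reduction to finite extensions the proof is just an elementary count of generators of $\mathfrak m'$, and the only thing one must watch is the Noetherian hypotheses (supplied precisely by that reduction) so that Nakayama's lemma applies to $A$. The weight of the lemma lies not in its proof but in its use downstream — combined with Schr\"oer's Proposition~\ref{proposition-schroeer}, it reduces the problem of bounding $\ejump_{k'/k}(R)$ for a regular local ring to bounding $\pdeg(\kappa/k) - \trdeg(\kappa/k)$ for the residue field.
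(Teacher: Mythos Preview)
Your proof is correct and follows essentially the same approach as the paper: both establish $\edim(R') \le \edim(R) + \edim(\kappa\otimes_k k')$ by exploiting the filtration $\mathfrak m R' \subseteq \mathfrak m'$ whose quotient is the maximal ideal of $\kappa\otimes_k k'$. The only cosmetic differences are that you first reduce to finite $k'/k$ (the paper works directly, since the cotangent-space dimensions make sense regardless) and that you phrase the count via explicit generators of $\mathfrak m'$, whereas the paper packages the same count as the short exact sequence $0 \to \mathfrak m R'/(\mathfrak m R' \cap \mathfrak m'^2) \to \mathfrak m'/\mathfrak m'^2 \to \mathfrak m'/(\mathfrak m R' + \mathfrak m'^2) \to 0$.
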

\begin{proof}
  Denote by $R'$ the local ring $R \otimes_k k'$, by $\mathfrak m'$
  its maximal ideal, and by
  $\kappa'$ its residue field $R'/\mathfrak m'$.
  Consider the short exact sequence of $\kappa'$-vector spaces,
  \begin{equation}\label{equation-short-exact-sequence}
    0 \to \mathfrak m R' / ( \mathfrak m R' \cap {\mathfrak m'}^2) \to 
    \mathfrak m'/ {\mathfrak m'}^2 \to \mathfrak m'/(\mathfrak m R' +
    {\mathfrak m'}^2) \to 0.
  \end{equation}
  As a $\kappa'$-vector space, 
  the dimension of the middle term is $\edim(R')$, by definition. 
  
  We next consider the right-hand term, first noting the isomorphism
  \begin{equation*}
    (\mathfrak m'/ \mathfrak m R') \otimes_{R'} \kappa'
     \cong \mathfrak m' / (\mathfrak m R' + {\mathfrak m'}^2).
  \end{equation*}
  Clearly $\mathfrak m'/\mathfrak m R'$ is the maximal ideal of $R'/
  \mathfrak m R' \cong \kappa \otimes_k k'.$  Since $\kappa'$ is its residue
  field, 
  the $\kappa'$-dimension of $(\mathfrak m'/\mathfrak m R') \otimes_{R'} \kappa'$
  is equal to $\edim(\kappa \otimes_k  k')$, which therefore equals the
  dimension of the right-hand term of
  \eqref{equation-short-exact-sequence}.
%  Therefore, it remains to show that $\dim_{\kappa'} (\mathfrak m R'/
%  \mathfrak m R' \cap (\mathfrak m')^2) = \edim(R)$.

  To analyze the left-hand term of
  \eqref{equation-short-exact-sequence},
  observe that 
  $$ \mathfrak m R'/ \mathfrak m  \mathfrak m' \cong (\mathfrak m /
  \mathfrak m ^2) \otimes_\kappa \kappa',$$ 
  and therefore
  $$\dim_{\kappa'}(\mathfrak m R'/\mathfrak m\mathfrak m') =
  \dim_\kappa (\mathfrak m/\mathfrak m^2) = \edim(R).$$
  Because of the natural inclusion, 
  $\mathfrak m \mathfrak
  m' \subseteq \mathfrak m R' \cap {\mathfrak m'}^2$, 
  we have the inequality
  $$\dim_{\kappa'}(\mathfrak m R'/(\mathfrak m R' \cap {\mathfrak m'}^2))
  \leq \dim_{\kappa'}(\mathfrak mR'/\mathfrak m \mathfrak m').$$
% = \edim(R).$$
  
  From the short exact sequence \eqref{equation-short-exact-sequence},
  it then follows that
  \begin{align}\label{equation-dimension-inequality}
    \edim(R')& = \edim(\kappa\otimes_k k') + \dim_{\kappa'}(\mathfrak m R' /
    (\mathfrak m R' \cap {\mathfrak m'}^2)) \nonumber\\
    & \leq
% \edim(\kappa\otimes_k k') + \dim_{\kappa'} (\mathfrak m R'/
%    \mathfrak m \mathfrak m')\\
%    & = 
    \edim(\kappa \otimes_k k') + \edim(R).\nonumber
\end{align}

\end{proof}

% Look at Bourbaki Alg. Comm chap V \S 2 no 3 lemme 4.
% May contain similar results... it studies this context

%\begin{example}
%  If $K = k$, then
%  $\ejump_{k'/k}(R) = 0 = \edim(K \otimes_k k')$.  On the other hand,
%  if $R$ is geometrically regular over $k$ and $K/k$ is a purely 
%  inseparable extension, then taking $k' := K$, we see that
%  $\ejump_{k'/k}(R) = 0 < \edim(K \otimes_k k')$.
%\end{example}

\section{A bound on embedding jumps}
\setcounter{equation}{0}
We now combine our main lemma with a result of Schr\"oer
to obtain a bound on the embedding jump at an arbitrary point of a regular
variety in terms of the $p$-degree and 
transcendence degree of the residue field at that point.
%Our main result follows immediately from Schr\"oer's theorem and our
%lemma.

\begin{theorem}\label{theorem-max-embedding-dim}
  Let $X$ be a regular $k$-variety.
%field $k$ of characteristic 
%  $p > 0$ that is
%  finitely generated field extension $k$ of a perfect field $\mathbb F$.
  If $k'/k$ is a purely inseparable extension, then for any $x \in X$
  with residue field $\kappa(x)$,
%, denoting its residue field by $\kappa(x)$, 
  the embedding jump  satisfies
  \begin{equation*}
    \ejump_{k'/k}(x)  \leq p\textrm{-}\mathrm{deg}(\kappa(x)/k) -
    \mathrm{tr.deg}(\kappa(x)/k).
  \end{equation*}
%    & \leq \mathrm{tr.deg}(k/\mathbb F).
%  \end{align*}
\end{theorem}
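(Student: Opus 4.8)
The plan is to feed the local ring $\OO_{X,x}$ into Lemma~\ref{lemma-ejump-bound} and then control the term $\edim(\kappa(x)\otimes_k k')$ that appears by means of Schr\"oer's Proposition~\ref{proposition-schroeer}. Since $X$ is a $k$-variety, $R:=\OO_{X,x}$ is a local Noetherian $k$-algebra with residue field $\kappa(x)$, and $\kappa(x)/k$ is a finitely generated field extension, so $\pdeg(\kappa(x)/k)$ and $\trdeg(\kappa(x)/k)$ are both finite. First I would apply Lemma~\ref{lemma-ejump-bound} to $R$ and the purely inseparable extension $k'/k$, which gives
\begin{equation*}
\ejump_{k'/k}(x)=\ejump_{k'/k}(R)\leq\edim\bigl(\kappa(x)\otimes_k k'\bigr);
\end{equation*}
note that only the local Noetherian property of $\OO_{X,x}$ is used at this step.

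Next I would bound $\edim(\kappa(x)\otimes_k k')$ above by $\pdeg(\kappa(x)/k)-\trdeg(\kappa(x)/k)$, and it is here that I expect the only (mild) subtlety to arise: Proposition~\ref{proposition-schroeer} computes this embedding dimension only once the base-change field contains $k^{1/p}$, which an arbitrary purely inseparable $k'/k$ need not. The fix is to enlarge $k'$ to the perfect closure $k^{1/p^\infty}$, using that every purely inseparable extension of $k$ embeds over $k$ into $k^{1/p^\infty}$ and that enlarging the (purely inseparable) base field cannot decrease the embedding dimension of the base change. Concretely, applying the non-negativity of embedding jumps (Remark~\ref{remark-jumps-are-nonnegative}(1)) to the local Noetherian $k'$-algebra $\kappa(x)\otimes_k k'$ and the purely inseparable extension $k^{1/p^\infty}/k'$ yields
\begin{equation*}
0\leq\ejump_{k^{1/p^\infty}/k'}\bigl(\kappa(x)\otimes_k k'\bigr)=\edim\bigl(\kappa(x)\otimes_k k^{1/p^\infty}\bigr)-\edim\bigl(\kappa(x)\otimes_k k'\bigr),
\end{equation*}
so that $\edim(\kappa(x)\otimes_k k')\leq\edim(\kappa(x)\otimes_k k^{1/p^\infty})$.

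Finally, since $k^{1/p^\infty}\supseteq k^{1/p}$, Proposition~\ref{proposition-schroeer} --- applied with $K=\kappa(x)$ and with $k^{1/p^\infty}$ in the role of the field extension there --- gives
\begin{equation*}
\edim\bigl(\kappa(x)\otimes_k k^{1/p^\infty}\bigr)=\edim\bigl(\kappa(x)\otimes_k k^{1/p}\bigr)=\pdeg(\kappa(x)/k)-\trdeg(\kappa(x)/k).
\end{equation*}
Chaining the three displayed relations yields the asserted inequality. Overall the argument is essentially an assembly of Lemma~\ref{lemma-ejump-bound}, the non-negativity of embedding jumps, and Proposition~\ref{proposition-schroeer}; the one point worth checking is that the possibly non-finitely-generated purely inseparable extensions appearing along the way cause no trouble, which is fine since $\kappa(x)\otimes_k k'$ remains local Noetherian and Proposition~\ref{proposition-schroeer} is stated with enough generality to cover the passage through $k^{1/p^\infty}$.
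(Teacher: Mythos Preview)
Your argument is correct and follows essentially the same route as the paper: apply Lemma~\ref{lemma-ejump-bound}, enlarge $k'$ to a purely inseparable extension containing $k^{1/p}$ using the non-negativity of embedding jumps, and then invoke Proposition~\ref{proposition-schroeer}. The only cosmetic difference is that you pass all the way to $k^{1/p^\infty}$, whereas the paper enlarges only to the compositum $k'' := k'(k^{1/p})$ inside $\bar k$; since both contain $k^{1/p}$, Schr\"oer's result applies equally well in either case.
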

\begin{proof}
  By Lemma \ref{lemma-ejump-bound}, the jump in embedding
  dimensions is bounded by
  $$\ejump_{k'/k}(x) \leq \edim(\kappa(x) \otimes_k
  k').$$
  Let $k''$ denote the subfield of the algebraic closure $\bar k$
  generated by $k'$ and 
  $k^{1/p}$.  By Remark \ref{remark-jumps-are-nonnegative},
  $\edim(\kappa(x)\otimes_k k') \leq \edim(\kappa(x) \otimes_k k'')$.
  Schr\"oer's result (Prop.~\ref{proposition-schroeer})
  implies that
  $\edim(\kappa(x) \otimes_k k'')$ equals $\edim(\kappa(x) \otimes_k
  k^{1/p})$
  and 
  also equals the difference between the $p$-degree and the transcendence
  degree of the extension $\kappa(x) / k$.  
\end{proof}

Our primary applications of the above result will be through the
following geometric consequence: 

\begin{corollary}\label{corollary-generic-fiber-embedding-dimension}
  Let $f: \mathcal X \to B$ be a morphism of smooth varieties over a
  perfect field $\F$.  Then the generic fiber $X$ is a 
  regular variety over the fraction field $k$ of $B$.  Moreover, the
  embedding dimension $\edim_{\overline{X}}(\bar x)$ at any point 
  % of the geometric generic  fiber
  $\bar x \in \overline{X} := X \times_k \bar k$ satisfies
  $$\edim_{\overline{X}}(\bar x) \leq \edim_X(x) + \dim(B),$$
  where $x \in X$ denotes the point lying under $\bar x \in \overline{X}$.
\end{corollary}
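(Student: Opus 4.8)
The plan is to split the algebraic closure $\bar k$ of $k$ into its separable and purely inseparable parts over $k$, dispose of the purely inseparable part with Theorem~\ref{theorem-max-embedding-dim}, and then bound the resulting defect of $p$-degree over transcendence degree using that $\F$ is perfect (Proposition~\ref{proposition-pdegree-over-perfect-field}). That $X$ is a regular variety is (one direction of) Proposition~\ref{prop-regular-is-generic-fiber}; concretely, each local ring $\OO_{X,x}$ is isomorphic to a local ring of $\mathcal X$, which is regular because $\mathcal X$ is smooth over $\F$.

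For the inequality I may assume $\F$ has characteristic $p > 0$, since in characteristic $0$ the base change $\overline X \to X$ is pro-\'etale and so preserves embedding dimensions, leaving nothing to prove. Let $k^{\mathrm{sep}} \subseteq \bar k$ be the separable closure of $k$, put $X' := X \times_k k^{\mathrm{sep}}$, and let $y \in X'$ be the point lying under $\bar x$, so that $y$ lies over $x$. Since $k^{\mathrm{sep}}/k$ is separable algebraic, $X' \to X$ is pro-\'etale; hence $X'$ is regular, $\edim_{X'}(y) = \edim_X(x)$, and --- a regular scheme being locally integral --- after shrinking to an integral open neighborhood of $y$ we may treat $X'$ as a regular $k^{\mathrm{sep}}$-variety. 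The remaining extension $\bar k/k^{\mathrm{sep}}$ is purely inseparable, so $\Spec \bar k \to \Spec k^{\mathrm{sep}}$ is a universal homeomorphism: $\bar x$ is the unique point of $\overline X$ over $y$, the ring $\OO_{\overline X,\bar x} = \OO_{X',y} \otimes_{k^{\mathrm{sep}}} \bar k$ is local, and so, by the definition of the embedding jump,
\[
  \edim_{\overline X}(\bar x) = \edim_{X'}(y) + \ejump_{\bar k/k^{\mathrm{sep}}}(y).
\]
Applying Theorem~\ref{theorem-max-embedding-dim} to the regular $k^{\mathrm{sep}}$-variety $X'$ bounds the second summand by $\pdeg(\kappa(y)/k^{\mathrm{sep}}) - \trdeg(\kappa(y)/k^{\mathrm{sep}})$.

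It then remains to see that this defect equals $\pdeg(\kappa(x)/k) - \trdeg(\kappa(x)/k)$ and that the latter is at most $\dim B$. Since $k^{\mathrm{sep}}/k$ and $\kappa(y)/\kappa(x)$ are separable algebraic --- hence have vanishing K\"ahler differentials and are formally smooth --- the first fundamental exact sequences for the towers $k \subseteq k^{\mathrm{sep}} \subseteq \kappa(y)$ and $k \subseteq \kappa(x) \subseteq \kappa(y)$ collapse to isomorphisms $\Omega_{\kappa(y)/k^{\mathrm{sep}}} \cong \Omega_{\kappa(y)/k} \cong \Omega_{\kappa(x)/k} \otimes_{\kappa(x)} \kappa(y)$, which, together with additivity of transcendence degree in towers and its vanishing on algebraic extensions, yields the claimed equality of defects. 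For the inequality, $\F$ is perfect and $\kappa(x)/\F$, $k/\F$ are finitely generated, so Proposition~\ref{proposition-pdegree-over-perfect-field} gives $\pdeg(\kappa(x)/\F) = \trdeg(\kappa(x)/\F) = \trdeg(\kappa(x)/k) + \trdeg(k/\F)$, while the right exact sequence $\Omega_{k/\F} \otimes_k \kappa(x) \to \Omega_{\kappa(x)/\F} \to \Omega_{\kappa(x)/k} \to 0$ gives $\pdeg(\kappa(x)/k) \le \pdeg(\kappa(x)/\F)$; subtracting and using $\trdeg(k/\F) = \dim B$ completes the bound, and with it the corollary. I expect the only genuinely delicate point to be this differential- and transcendence-degree bookkeeping --- in particular, confirming that the separable base change $X' \to X$ changes neither the embedding dimension at $y$ nor the defect $\pdeg - \trdeg$ --- with Theorem~\ref{theorem-max-embedding-dim} itself entering only as a black box once the reduction to the purely inseparable case is in place.
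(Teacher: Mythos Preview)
Your argument is correct and rests on the same two ingredients as the paper's proof: Theorem~\ref{theorem-max-embedding-dim} to bound the jump by $\pdeg(\kappa/k)-\trdeg(\kappa/k)$, and Proposition~\ref{proposition-pdegree-over-perfect-field} together with the surjection $\Omega_{\kappa/\F}\twoheadrightarrow\Omega_{\kappa/k}$ to bound that defect by $\trdeg(k/\F)=\dim B$. The only real difference is organizational: you first pass to $k^{\mathrm{sep}}$ and then apply the theorem to the purely inseparable extension $\bar k/k^{\mathrm{sep}}$, which forces you to verify that both $\edim$ and the defect $\pdeg-\trdeg$ are unchanged by the pro-\'etale base change. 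The paper instead writes $\ejump_{\bar k/k}(x)$ directly and invokes the theorem in one line---a mild abuse, since $\bar k/k$ is not purely inseparable, implicitly justified by factoring the other way (through $k^{1/p^\infty}$, after which the remaining separable extension to $\bar k$ does not alter $\edim$). Your route is more scrupulous and costs a paragraph of K\"ahler-differential bookkeeping; the paper's is shorter but leaves that reduction to the reader.
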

\begin{proof}
  By Theorem \ref{theorem-max-embedding-dim},
%  $\edim_{X_{\bar k}}(\bar x) - \edim_X(x) \leq
  $\ejump_{\bar k/ k}(x) \leq \pdeg(\kappa(x)/k) -
  \trdeg(\kappa(x)/k)$, 
  where $\kappa(x)$ denotes the residue field of $x \in X$.
  Clearly 
  $$\pdeg(\kappa(x)/k) \leq \pdeg(\kappa(x)/\F) =
  \trdeg(\kappa(x)/\F),$$
  with the latter equality following from Proposition
  \ref{proposition-pdegree-over-perfect-field}. 
  Therefore,
  \begin{align*}
    \edim_{\overline{X}}(\bar x) - \edim_X(x) 
    & \leq
    \trdeg(\kappa(x)/\F) - \mathrm{tr.deg}(\kappa(x)/k)\\
    &  = \trdeg(k/\F)\\
    & = \dim(B).
  \end{align*}
\end{proof}

\section{Regular del Pezzo surfaces}
\setcounter{equation}{0}

The primary motivation for this investigation was to determine which
singular del Pezzo surfaces can occur as the geometric generic fiber of the
contraction of an extremal curve class on a smooth $3$-fold.  Although
we do not answer this question definitively, the above 
results do rule out the nasty examples in characteristics $p >
3$ of
%.  Such nasty examples include
% nonreduced del
%Pezzo surfaces as well as
 non-normal del Pezzo surfaces $X$
with $H^1(X,\OO_X) \neq 0$.
%, a phenomenon not observed in
%characteristic $0$ nor even in normal del Pezzo surfaces in characteristic
%$p > 0$ (cf.~\cite{rei1}, \cite{hid-wat1}).

\begin{proposition}\label{proposition-h1=0-in-p>3}
  Let $X$ be a regular del Pezzo surface over a finitely generated
  field extension $k/\F$ of a perfect field $\F$ of
  characteristic $p$ and transcendence degree  $\trdeg(k/\F)= d$.
  If $d \leq 1$ then $X$ is geometrically reduced.  
  If $p > d + 2$ and $X$ is geometrically reduced,
  then $H^1(X, \OO_X) = 0$.
\end{proposition}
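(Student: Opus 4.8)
The plan is to handle the two assertions separately, using the embedding-jump bound (Corollary~\ref{corollary-generic-fiber-embedding-dimension} and Theorem~\ref{theorem-max-embedding-dim}) to control how far $\overline{X} := X \times_k \bar k$ can degenerate, and then invoking Hidaka--Watanabe's vanishing theorem for normal del Pezzo surfaces over algebraically closed fields (which applies since $\bar k$ is perfect). First, for the geometric reducedness claim when $d \leq 1$: by Schr\"oer's result (combined with the earlier Proposition that geometric reducedness is equivalent to reducedness after the height-$1$ extension $k^{1/p}/k$), it suffices to show that $X_{k^{1/p}}$ is reduced, which since $X$ is regular and $X_{k^{1/p}} \to X$ is a universal homeomorphism reduces to a local statement at each point; the embedding jump there is bounded by $\pdeg(\kappa(x)/k) - \trdeg(\kappa(x)/k) \leq \trdeg(k/\F) = d \leq 1$. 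I would then argue that a jump of at most $1$ cannot destroy reducedness of a regular (hence at each point a domain, indeed after passing to the local ring a DVR or field in low enough codimension) local ring---more carefully, since $X$ is a regular \emph{surface}, at a point $x$ of codimension $0$ or $1$ the jump is automatically $0$ (the residue field is $k$ or a finitely generated extension of transcendence degree $1$ over $k$, so $\pdeg - \trdeg$ over $k$ is still $\leq d - 0$ or so—one must check the arithmetic), and at a closed point one checks directly that $\OO_{\overline X, \bar x}$ with embedding codimension $\leq 1$ is a hypersurface singularity in a regular ring, hence reduced.

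For the vanishing of $H^1(X,\OO_X)$ when $p > d+2$ and $X$ is geometrically reduced: the strategy is to compare $X$ with its geometric base change $\overline X$. By flat base change, $H^1(X, \OO_X) \otimes_k \bar k \cong H^1(\overline X, \OO_{\overline X})$, so it suffices to prove $H^1(\overline X, \OO_{\overline X}) = 0$. Now $\overline X$ is a reduced (by hypothesis) projective surface over $\bar k$ with ample anticanonical bundle---a possibly non-normal del Pezzo surface. If $\overline X$ were normal, Hidaka--Watanabe would finish it immediately. The key point is that Corollary~\ref{corollary-generic-fiber-embedding-dimension} bounds $\edim_{\overline X}(\bar x) \leq \edim_X(x) + d$; since $X$ is regular of dimension $2$, this gives $\ecodim_{\overline X}(\bar x) \leq d$ at every point, so $\overline X$ is a surface whose embedding codimension is at most $d$ everywhere. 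I would then use this to control the conductor/non-normal locus: the non-normal locus is a proper closed subset, and the bound on embedding codimension (together with $p > d+2$) should force the singularities along it to be mild enough---e.g. that $\overline X$ is Gorenstein in codimension $1$ with only the kinds of non-normal crossings Reid classifies, and crucially that these do not arise in characteristic $p > d+2$. Concretely I expect to run an argument on the normalization $\nu: \overline{X}^\nu \to \overline{X}$: with conductor ideal $\mathfrak c$, one has an exact sequence $0 \to \OO_{\overline X} \to \nu_*\OO_{\overline X^\nu} \to \mathcal Q \to 0$ where $\mathcal Q$ is supported on the non-normal locus, giving $H^1(\overline X, \OO_{\overline X}) \hookrightarrow$ (something built from $H^1(\overline X^\nu, \OO_{\overline X^\nu})$ and $H^0$ of $\mathcal Q$-type sheaves), and I would bound each piece: $H^1$ of the normalization vanishes by Hidaka--Watanabe applied to the (normal, del Pezzo or at worst canonically-positive) normalization, and the $H^0$ contribution is killed by the characteristic hypothesis via the classification of the possible non-normal structures in terms of purely inseparable data of bounded degree (degree $\leq$ something like $p^d$, which cannot produce a $1$-dimensional piece when $p$ is large compared to $d$).

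The main obstacle, and where the real content lies, is the second part: translating the embedding-codimension bound $\ecodim_{\overline X}(\bar x) \leq d$ into enough structure on the non-normal locus to force $H^1(\overline X,\OO_{\overline X}) = 0$ once $p > d+2$. The inequality from the main theorem is a pointwise bound on a single numerical invariant, whereas what one ultimately needs is global cohomological vanishing; bridging that gap requires understanding \emph{which} non-normal del Pezzo structures are compatible with small embedding codimension \emph{and} small characteristic, presumably by a careful analysis along the lines of Reid's examples---showing his $H^1 \neq 0$ examples inherently require either large embedding codimension (ruled out for small $d$) or small characteristic (ruled out by $p > d+2$). I would expect to need the explicit structure of the conductor and a vanishing input on the normalization, and to verify the numerical threshold $p > d+2$ is exactly what makes the degenerate configurations impossible; getting that constant right is the delicate step.
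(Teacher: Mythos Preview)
Your argument for geometric reducedness when $d \leq 1$ has a genuine gap. The non-strict bound $\ejump_{k^{1/p}/k}(x) \leq d$ from Theorem~\ref{theorem-max-embedding-dim} does \emph{not} force reducedness when $d = 1$: at the generic point the local ring is the function field $K$, and a jump of $1$ means $K \otimes_k k^{1/p}$ is an Artin local ring of embedding dimension $1$, i.e.\ of the form $K'[\eps]/(\eps^n)$, which is non-reduced whenever $n \geq 2$. (Concretely, $k = \F_p(t)$ and $K = k(t^{1/p})$ already gives a regular $0$-dimensional $k$-scheme with $d = 1$ that is not geometrically reduced.) The paper does not argue via the embedding-jump bound here at all; it simply invokes Schr\"oer's \emph{strict} inequality for proper fibrations (the theorem quoted in the introduction), which applies because a del Pezzo surface is proper with $H^0(X,\OO_X) = k$ and gives embedding codimension at the generic point strictly less than $d$, hence $0$ when $d \leq 1$.

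For the vanishing of $H^1$ your normalization/conductor strategy is far more elaborate than what is needed, and you have not identified the mechanism that produces the threshold $p > d+2$. The paper's proof is a two-line application of Reid's classification: among reduced non-normal del Pezzo surfaces over an algebraically closed field of characteristic $p > 3$, those with $H^1(\overline X,\OO_{\overline X}) \neq 0$ necessarily contain a point $\bar x$ with $\edim_{\overline X}(\bar x) = p$. Corollary~\ref{corollary-generic-fiber-embedding-dimension} gives $\edim_{\overline X}(\bar x) \leq d + 2$ at every point, so $p > d+2$ makes such points impossible, and the normal case is Hidaka--Watanabe. There is no need to analyze the conductor or the cohomology of the normalization; the single numerical fact from Reid converts the pointwise embedding-dimension bound directly into the cohomological vanishing.
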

\begin{proof}
  If $k$ is of transcendence degree at most $1$ over the perfect field
  $\F$, then
  $\overline{X}  := X \times_k {\bar k}$ is reduced
  (cf.~\cite{sch-on}).
  By the classification of normal del Pezzo surfaces over an
  algebraically closed field (cf.~\cite{hid-wat1}), the
  result is true if $\overline{X}$ is normal.  
  This just leaves the case where $\overline{X}$ is integral but
  non-normal (and hence where $d > 0$).  Such 
  examples were classified by Reid (cf.~\cite{rei1}).
  In particular, in characteristics $p > 3$, the nonvanishing
  $H^1(\overline{X},\OO_{\overline{X}}) \neq 0$ is only
  possible when there exists points $ \bar x \in \overline{X}$ with
  $\edim_{\overline{X}}(\bar x) = p$.
  (cf.~\cite[\S4.4]{rei1}).   
  By  Corollary \ref{corollary-generic-fiber-embedding-dimension},
  $\edim_{\overline{X}}(\bar x) \leq d + 2$ for all $\bar
  x \in \overline{X}$, and 
  therefore $H^1(\overline{X}, \OO_{\overline{X}}) = 0$, which 
  implies $H^1(X, \OO_X) = 0$.  
\end{proof}

\section{Jumping is a height one phenomenon}
\setcounter{equation}{0}

An extension of characteristic $p$ fields $L/K$ is said to be of
\emph{height} one if $L^p \subseteq K$. 
As a consequence of Theorem~\ref{theorem-max-embedding-dim}, we show
that jumps in embedding 
dimension are a strictly height one phenomenon.  As a corollary,
we recover the well-known result \cite[Thm.~IV.0.22.5.8]{EGA} 
that asserts that geometric regularity may be checked over
height one field extensions.
We set the following notation for this section:
\begin{notation}
  For an imperfect field $k$ of characteristic $p$, an element
  $t \in k \setminus 
  k^p$, and a $k$-algebra $R$, set
\begin{itemize}
\item $k_n := k(\sqrt[p^n]{t})$ and
\item $R_n := R \otimes_k k_n$.
\end{itemize}
\end{notation}

\begin{lemma}\label{lemma-roots-of-t}
    Let $R = K$ be a finitely generated field extension of an
    imperfect field $k$ of characteristic $p$.
    If $t \in k\setminus k^p$ and $m := \max \{k \in \N: t \in
    K^{p^k}\}$,
        then for all $0 < n \leq m$, the ring
    $$R_n  \cong 
    \begin{cases}
      K[\eps_n]/(\eps_n^{p^n}) & : \textrm{ if }0 \leq n \leq m\\
      K(\sqrt[p^n]{t})[\eps_n]/(\eps_n^{p^m}) & : \textrm{ if } m < n, 
    \end{cases}
    $$
    and the natural ring inclusion $R_{n-1} \subseteq R_{n}$ is
    given by 
$$    \begin{cases}
      \eps_{n-1} 
    \mapsto \eps_{n}^{p}& : \textrm{ if } 0 \leq n \leq m\\
    \eps_{n-1} \mapsto \eps_{n} & : \textrm{ if } m < n.
    \end{cases}$$
    In particular, the residue field of $R_n$ equals $K$ if and only
    $\sqrt[p^n]{t} \in K$. 
\end{lemma}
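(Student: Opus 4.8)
The plan is to make the ring $R_n = K\otimes_k k_n$ completely explicit. Since $t\in k\setminus k^p$, the polynomial $Z^{p^n}-t$ is irreducible over $k$: its root $\sqrt[p^n]{t}$ has degree $p^n$ over $k$, the least $j$ with $(\sqrt[p^n]{t})^{p^j}\in k$ being $n$ because $(\sqrt[p^n]{t})^{p^{n-1}}=\sqrt[p]{t}\notin k$. Hence $k_n=k[Z]/(Z^{p^n}-t)$ and $R_n=K[Z]/(Z^{p^n}-t)$; we write $u_j\in R_n$ for the class of $Z^{p^{n-j}}$, so $u_n$ is the class of $Z$, $u_j^p=u_{j-1}$, and $u_0=t$. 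We record in passing that $m<\infty$: as $t\notin k^p$ the element $t$ is transcendental over the prime field (otherwise it would lie in the finite, hence perfect, algebraic closure of $\F_p$ in $k$, and so in $k^p$), whereas the perfect subfield $\bigcap_j K^{p^j}$ is algebraic over $\F_p$ since $K$ is finitely generated over $\F_p$.

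For $0\le n\le m$ we have $t\in K^{p^n}$, so there is a unique $s_n\in K$ with $s_n^{p^n}=t$, namely $s_n=\sqrt[p^n]{t}$, and $Z^{p^n}-t=(Z-s_n)^{p^n}$ in $K[Z]$. Taking $\eps_n:=u_n-s_n$ yields a $K$-algebra isomorphism $R_n\cong K[\eps_n]/(\eps_n^{p^n})$ with residue field $K$; and since $s_{n-1}=s_n^p$ and the natural inclusion $R_{n-1}\hookrightarrow R_n$ sends $u_{n-1}$ to $u_n^p$, it sends $\eps_{n-1}=u_{n-1}-s_{n-1}$ to $u_n^p-s_n^p=(u_n-s_n)^p=\eps_n^p$.

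For $n>m$, put $s:=\sqrt[p^m]{t}\in K$; maximality of $m$ forces $t\notin K^{p^{m+1}}=(K^p)^{p^m}$, i.e.\ $s\notin K^p$, so $g(Z):=Z^{p^{n-m}}-s$ is irreducible over $K$. From $t=s^{p^m}$ we get $Z^{p^n}-t=g(Z)^{p^m}$, so $R_n=K[Z]/(g^{p^m})$ is an Artinian local ring with residue field $K[Z]/(g)=K(\sqrt[p^n]{t})=:L_n$ and principal maximal ideal $\mathfrak m_n=(\eps_n)$, where $\eps_n:=u_n^{p^{n-m}}-s=g(u_n)$; note $\eps_n^{p^m}=g(u_n)^{p^m}=u_n^{p^n}-t=0$. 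A short length count pins down the rest: all composition factors of $R_n$ being $\cong L_n$, one has $\ell(R_n)=\dim_K R_n/\dim_K L_n=p^n/p^{n-m}=p^m$, while each $\mathfrak m_n^i/\mathfrak m_n^{i+1}$ is cyclic over $L_n$ (as $\mathfrak m_n^i=(\eps_n^i)$) and so of length $\le 1$, which forces $\eps_n^{p^m-1}\ne 0$ and makes $1,\eps_n,\dots,\eps_n^{p^m-1}$ a composition series. Now $R_n$ is an Artinian local ring containing a field, so Cohen's structure theorem (an Artinian local ring containing a field has a coefficient field) supplies a subfield $\kappa_n\subseteq R_n$ mapping isomorphically onto $L_n$; then $1,\eps_n,\dots,\eps_n^{p^m-1}$ is a $\kappa_n$-basis of $R_n$, giving a ring isomorphism $R_n\cong L_n[\eps_n]/(\eps_n^{p^m})$. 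For the transition map ($m\le n-1$) the inclusion $R_{n-1}\hookrightarrow R_n$ sends $u_{n-1}$ to $u_n^p$, hence carries $\eps_{n-1}=u_{n-1}^{p^{(n-1)-m}}-s$ to $u_n^{p^{n-m}}-s=\eps_n$ (with the convention $\eps_m=u_m-s$ when $n-1=m$). Finally the residue field of $R_n$ is $K$ exactly when $n\le m$, and $n\le m$ holds if and only if $\sqrt[p^n]{t}\in K$, which is the last assertion.

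The main obstacle is the isomorphism $R_n\cong L_n[\eps_n]/(\eps_n^{p^m})$ for $n>m$. One cannot prove it by exhibiting a $p^{n-m}$-th root of $s$ inside $R_n$: there is none — already for $p=2$, $m=1$ the ring $R_2=K[Z]/(Z^4-s^2)$ contains no square root of $s$, and indeed no subfield strictly between $K$ and its residue field — so $R_n$ does not contain $L_n$ as a $K$-algebra and the appeal to Cohen's structure theorem is essential. Accordingly these are isomorphisms of rings, not of $K$-algebras; the coefficient fields $\kappa_n$ need not be chosen compatibly as $n$ grows, and ``$\eps_{n-1}\mapsto\eps_n$'' is to be read as the (verified) assertion that the distinguished uniformizers correspond under the natural inclusion $R_{n-1}\hookrightarrow R_n$.
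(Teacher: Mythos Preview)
Your proof is correct and, in the case $n>m$, is in fact more careful than the paper's. For $0\le n\le m$ the two arguments are essentially identical: both write $Z^{p^n}-t=(Z-s_n)^{p^n}$ with $s_n=\sqrt[p^n]{t}\in K$ and read off the isomorphism and the transition $\eps_{n-1}\mapsto\eps_n^p$.

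For $n>m$ the approaches differ. The paper argues via the tensor decomposition
\[
K\otimes_k k_n \;\cong\; (K\otimes_k k_m)\otimes_{k_m}k_n \;\cong\; K[\eps_m]/(\eps_m^{p^m})\otimes_{k_m}k_n \;\cong\; K(\sqrt[p^n]{t})[\eps_m]/(\eps_m^{p^m}),
\]
asserting the last isomorphism without comment. But this is precisely where the subtlety you isolate lives: the $k_m$-structure on $K[\eps_m]/(\eps_m^{p^m})=K\otimes_k k_m$ sends $s_m:=\sqrt[p^m]{t}$ to $s_m-\eps_m$, not to the element $s_m\in K$, so one cannot simply ``pull $k_n$ through $K$'' to obtain $K(\sqrt[p^n]{t})$ as a $K$-subalgebra. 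Your route---factoring $Z^{p^n}-t=g(Z)^{p^m}$, using the length count to see that $\eps_n=g(u_n)$ is a uniformizer of exact nilpotency order $p^m$, and invoking Cohen's theorem to produce a coefficient field---makes the isomorphism honest as a \emph{ring} isomorphism, and your $p=2$, $m=1$, $n=2$ example shows explicitly that there is no $K$-algebra (indeed no $k$-algebra) isomorphism in general, since $R_2$ contains no square root of $s$. Your verification that the canonical inclusion carries the concrete uniformizer $\eps_{n-1}=u_{n-1}^{p^{n-1-m}}-s$ to $\eps_n$ is likewise more explicit than the paper's treatment. None of this affects the downstream use of the lemma in the paper, which only needs the residue-field statement and the $K$-linear isomorphisms in the range $n\le m$; but your write-up closes a genuine gap in the $n>m$ case.
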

\begin{proof}
  If $0 \leq n \leq m$, then $\sqrt[p^n]{t} \in K$ and
  therefore $R_n = K \otimes_k k(\sqrt[p^n]{t})$ is isomorphic to the
  Artin local 
  ring $K[\eps_n]/(\eps_n^{p^n})$,
  where $\eps_n:= \sqrt[p^n]{t} \otimes 1 - 1 \otimes
  \sqrt[p^n]{t}$.
  Moreover, it is follows then that $\eps_{n-1} = \eps_{n}^p$.
  On the other hand, if $m < n$, then $\sqrt[p^n]{t} \notin R_{n-1}$.
  Moreover, since $k_m \subseteq K$, the result follows by composing the
  following isomorphisms:
  \begin{align*}
    K \otimes_k k(\sqrt[p^n]{t}) &\cong (K \otimes_k k_m) \otimes_{k_m}
    k_n\\
    & \cong K[\eps_m]/(\eps_m^{p^m}) \otimes_{k_m} k_m(\sqrt[p^n]{t}) \\
    & \cong K(\sqrt[p^n]{t})[\eps_m]/(\eps_m^{p^m}).
  \end{align*}
\end{proof}

\begin{proposition}\label{proposition-embedding-jump}
Let $k$ be an imperfect field of characteristic $p$.  If $t \in k\setminus
k^p$ and $R$ is a noetherian local $k$-algebra,
 then for any $n \geq 1$,
$$\ejump_{k_1/k}(R) = \ejump_{k_n/k}(R).$$
\end{proposition}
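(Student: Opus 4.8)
The plan is to reduce the statement to the case $R = K$ a finitely generated field extension of $k$, to which Lemma~\ref{lemma-roots-of-t} applies, and then to invoke Theorem~\ref{theorem-max-embedding-dim} together with the fact that embedding jumps are non-negative. First I would observe that by Remark~\ref{remark-jumps-are-nonnegative}(2) it suffices to prove the inequality $\ejump_{k_n/k}(R) \le \ejump_{k_1/k}(R)$, since the reverse inequality $\ejump_{k_1/k}(R) \le \ejump_{k_n/k}(R)$ follows from monotonicity of embedding dimension under the field extension $k_1 \subseteq k_n$ (each intermediate jump being non-negative). Both sides only see a finite sub-extension, so no Noetherian subtlety beyond what Remark~\ref{remark-jumps-are-nonnegative} already provides is needed.

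Next I would bound $\ejump_{k_n/k}(R)$ from above. By Lemma~\ref{lemma-ejump-bound} applied to the extension $k_n/k$, we have $\ejump_{k_n/k}(R) \le \edim(\kappa \otimes_k k_n)$, where $\kappa = R/\mathfrak m$ is the residue field. Now $\kappa$ is a field extension of $k$, so I apply Lemma~\ref{lemma-roots-of-t} with $K = \kappa$: the ring $\kappa \otimes_k k_n$ is, up to isomorphism, either $\kappa[\eps_n]/(\eps_n^{p^n})$ (if $\sqrt[p^n]{t}\in\kappa$, more precisely if $n\le m$) or $\kappa(\sqrt[p^n]{t})[\eps_m]/(\eps_m^{p^m})$ (if $n>m$), where $m = \max\{j : t \in \kappa^{p^j}\}$. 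In either case this is a principal-ideal Artin local ring, so its maximal ideal is generated by a single element $\eps$, giving $\edim(\kappa \otimes_k k_n) \le 1$ — with equality unless $m=0$ and $n\le m$ forces the extension to be trivial, i.e. unless $t\notin\kappa^p$. The same computation with $n=1$ shows $\edim(\kappa\otimes_k k_1) \le 1$, with equality precisely when $\sqrt[p]{t}\notin\kappa$, i.e. $t\notin\kappa^p$.

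The key point is thus to compare these two bounds in a way that yields equality of the jumps, not merely a common upper bound of $1$. If $t \in \kappa^p$, then $\sqrt[p]{t}\in\kappa$ so $\kappa\otimes_k k_n$ has residue field $\kappa$ for \emph{every} $n$ in range $n\le m$, and for $n>m$ Lemma~\ref{lemma-roots-of-t} still produces an Artin local ring with a principal maximal ideal; the point is that in the range $n \le m$ the natural inclusion $R_{n-1}\subseteq R_n$ sends $\eps_{n-1}\mapsto\eps_n^p$, so $\eps_{n-1}$ lies in $\mathfrak m_{R_n}^2$ once $p^{n-1} \ge 2$... — here I should instead argue directly: since embedding dimension of $R\otimes_k k_n$ relative to $R$ is governed by the residue-field factor $\kappa\otimes_k k_n$ together with the bounded left-hand term in \eqref{equation-short-exact-sequence}, and that left-hand term is itself monotone and bounded, the jump stabilizes after the first step. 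The cleanest route is: (i) $\ejump_{k_1/k}(R) \le \ejump_{k_n/k}(R)$ by monotonicity; (ii) $\ejump_{k_n/k}(R) \le \edim(\kappa\otimes_k k_n)$ by Lemma~\ref{lemma-ejump-bound}; (iii) $\edim(\kappa\otimes_k k_n) = \edim(\kappa\otimes_k k_1)$, both being $0$ if $t\in\kappa^p$ and $1$ otherwise, by the explicit description in Lemma~\ref{lemma-roots-of-t}; and (iv) $\edim(\kappa\otimes_k k_1) = \ejump_{k_1/k}(R)$ — this last equality is the delicate one and is what makes the chain collapse.

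The main obstacle is establishing step (iv), that $\ejump_{k_1/k}(R)$ actually \emph{achieves} the bound $\edim(\kappa\otimes_k k_1)$ of Lemma~\ref{lemma-ejump-bound}, rather than falling short. Equivalently, I must show the left-hand term of the short exact sequence \eqref{equation-short-exact-sequence} vanishes for the height-one extension $k_1/k$: that $\mathfrak m R_1 \subseteq \mathfrak m_1^2$, i.e. $\mathfrak m R_1 \subseteq \mathfrak m R_1 \cap \mathfrak m_1^2$. Since $\eps_1 = \sqrt[p]t\otimes 1 - 1\otimes\sqrt[p]t$ satisfies $\eps_1^p \in \mathfrak m R_1$ (as $\eps_1^p$ is, up to sign, $t\otimes1 - 1\otimes t$... no — $\eps_1^p = t\otimes 1 - 1\otimes t$ lies in $k\otimes_k k_1$, which need not be in $\mathfrak m R_1$), I will need to exploit the height-one hypothesis: $k_1^p \subseteq k$ means Frobenius carries $R_1$ into $R$, and $\mathfrak m_1^{[p]}$ (the ideal generated by $p$-th powers) is contained in $\mathfrak m R_1$. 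This is exactly where $p > 0$ and height one are essential, and it is the one place where I expect to have to be careful rather than formal. Once (iv) is in hand, combining (i)–(iv) gives $\ejump_{k_1/k}(R) \le \ejump_{k_n/k}(R) \le \edim(\kappa\otimes_k k_n) = \edim(\kappa\otimes_k k_1) = \ejump_{k_1/k}(R)$, forcing equality throughout.
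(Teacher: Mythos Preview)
Your chain (i)--(iii) is fine, but step (iv) is simply false, and this is a genuine gap, not merely a delicate point.  Take $R = k[x]_{(x^p - t)}$.  This is a DVR with $\edim(R) = 1$ and residue field $\kappa = k[x]/(x^p - t) \cong k_1$.  After base change, $R_1 = R \otimes_k k_1$ has maximal ideal $\mathfrak m_1 = (x - \sqrt[p]{t})$ (since $x^p - t = (x - \sqrt[p]{t})^p$ already lies in this principal ideal), so $\edim(R_1) = 1$ and hence $\ejump_{k_1/k}(R) = 0$.  On the other hand $\kappa \otimes_k k_1 = k_1 \otimes_k k_1 \cong k_1[\eps]/(\eps^p)$ has embedding dimension $1$.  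So $\ejump_{k_1/k}(R) = 0 \neq 1 = \edim(\kappa \otimes_k k_1)$, and your chain of inequalities does not collapse.

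Your attempted justification of (iv) also rests on a misreading of the short exact sequence~\eqref{equation-short-exact-sequence}.  Vanishing of the left-hand term gives $\edim(R_1) = \edim(\kappa \otimes_k k_1)$, i.e.\ $\ejump_{k_1/k}(R) = \edim(\kappa \otimes_k k_1) - \edim(R)$, not $\ejump_{k_1/k}(R) = \edim(\kappa \otimes_k k_1)$.  Equality in Lemma~\ref{lemma-ejump-bound} corresponds instead to the left-hand term attaining its \emph{maximum} value $\edim(R)$, i.e.\ to $\mathfrak m R_1 \cap \mathfrak m_1^2 = \mathfrak m \mathfrak m_1$; in the example above the left-hand term actually does vanish (because $\mathfrak m R_1 = (x - \sqrt[p]{t})^p R_1 \subseteq \mathfrak m_1^2$), yet (iv) still fails.

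What your argument \emph{does} correctly establish is that both jumps lie in $\{0,1\}$ and are equal whenever $\edim(\kappa \otimes_k k_1) = 0$ or $\ejump_{k_1/k}(R) = 1$.  The remaining case --- $\edim(\kappa \otimes_k k_1) = 1$ while $\ejump_{k_1/k}(R) = 0$ --- genuinely occurs and requires work.  The paper handles it by first reducing inductively to $n = 2$ and then, in this residual case, analysing the tower of residue fields $K \subseteq K_1 \subseteq K_2$ according to the degrees $[K_1:K],[K_2:K_1] \in \{1,p\}$; the hardest sub-case ($K = K_1 = K_2$) is dispatched by choosing an explicit lift $f \in \mathfrak m_2$ of $\eps_2$, noting that $f^p$ lifts $\eps_1$, and tracking a minimal generating set through the tower to show $\edim(R_2) = \edim(R)$ directly.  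There is no shortcut of the form (iv).
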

\begin{proof}
First, assume we have proven the result in the base case $n = 2$.
By applying this to the field $k_{n-2}$ and the noetherian
local $k_{n-2}$-algebra $R_{n-2}$, it would follow that for
any $n \geq 2$,  
$$\ejump_{k_n/k_{n-2}}(R_{n-2}) = \ejump_{k_{n-1}/k_{n-2}}(R_{n-2}).$$
Using this equality, we derive the general result by observing
\begin{align*}
  \ejump_{k_n/k}(R) & = \ejump_{k_n/k_{n-2}}(R_{n-2}) +
  \ejump_{k_{n-2}/k}(R)\\
  & = \ejump_{k_{n-1}/k_{n-2}}(R_{n-2}) +
    \ejump_{k_{n-2}/k}(R)\\
    & = \ejump_{k_{n-1}/k}(R),
\end{align*}
and then arguing inductively.
Thus, it suffices to prove the result in the case $n =2$.

Let $n =2$ and note by Remark~\ref{remark-jumps-are-nonnegative}(1)
and Theorem~\ref{theorem-max-embedding-dim} that
$$0 \leq \ejump_{k_1/k}(R) \leq \ejump_{k_2/k}(R) \leq 1.$$  
Equality follows immediately
in the case $\ejump_{k_1/k}(R) = 1$, so we henceforth 
assume $\ejump_{k_1/k}(R) = 0$.  It easily
follows that $\ejump_{k_2/k}(R) 
= \ejump_{k_2/k_1}(R_1)$, and we finish the proof by showing that this
quantity also is zero. 

Let $K, K_1,$ and $K_2$ be the residue fields
of $R$, $R_1$, and $R_2$, respectively, and denote by
$\mathfrak m$, $\mathfrak m_1$, and $\mathfrak m_2$ the corresponding
maximal ideals.  Clearly there are inclusions $K \subseteq K_1
\subseteq K_2$.  As  $K_1$ is a quotient of $K \otimes_k k_1$ and
$K_2$ is a quotient of $K_1 \otimes_{k_1} k_2$, 
%induced from 
%the morphisms $$R/ \mathfrak m \to R \otimes_k k_1 / \mathfrak m \to R
% \otimes_k k_2 / \mathfrak m$$
%after quotienting by nilradical ideals, 
it follows that $[K_1:K], [K_2:K_1] \in \{1,p\}$.
Moreover, $[K_1:K] = p$ if and only if $K_1 = K \otimes_k k_1$, that
is, if and only if $\mathfrak m_1 = \mathfrak m
R_1$, and   similarly, $[K_2:K_1] = p$ if and only if $\mathfrak m_2 =
\mathfrak m_1 R_2$.
%Notice that if $[K':K] = p$, then  
We shall
conclude the proof by analyzing separately the
following cases:\\

\noindent\textbf{Case:}
$[K_2:K_1] = p$.\\
As noted above, this holds  only if $\mathfrak m_2 =
\mathfrak m_1 R_2$.  It follows that $\ejump_{k_2/k_1}(R_1) = 0$.\\
%This proves the claim since $\ejump_{k_2/k_1}(R_1) = \ejump_{k_2/k}(R)
%- \ejump_{k_1/k}(R)$.\\

\noindent \textbf{Case:} 
$[K_1:K] = p$.\\
Since $K_1$ is the residue field of $K \otimes_k k_1$ and $K_1 \neq K$,
Lemma~\ref{lemma-roots-of-t} implies that 
$\sqrt[p]{t} \notin K$ and hence $\sqrt[p^2]{t} \notin K$.  
This 
means that 
$K_2$, which contains $\sqrt[p^2]{t}$ and is at most a
$p^2$-dimensional vector space over $K$, 
must be precisely $K_2 = K(\sqrt[p^2]{t})$ with $[K_2:K_1] = p$. 
It follows that
% $\mathfrak m_1 =
%\mathfrak m R_1$ and
$\mathfrak m_2 = \mathfrak m_1 R_2$, and hence that
$\ejump_{k_2/k_1}(R_1) = 0$.\\

\noindent\textbf{Case:}
$[K_1:K] = [K_2:K_1] = 1$.\\
Since $K = K_2$ is the residue field of $K \otimes_k k_2$,
Lemma~\ref{lemma-roots-of-t} implies that
$\sqrt[p^2]{t} \in K$.  Another application of
Lemma~\ref{lemma-roots-of-t} yields the isomorphisms
\begin{align}
K \otimes_k k_1 & \cong R_1/\mathfrak m \cong
  K[\eps_1]/(\eps_1^p),\\
K \otimes_k k_2 & \cong R_2/\mathfrak m 
  \cong K[\eps_2]/(\eps_2^{p^2}),
  \end{align}
  where the natural inclusion $R_1/\mathfrak m
  \to R_2/\mathfrak m$ is
  given by $\eps_1 \mapsto \eps_2^p$.
  Choosing $f \in \mathfrak m_2$ to be
  any lift of $\eps_2$, it follows that $f^p \in
  \mathfrak m_1$ is a lift of $\eps_1$.  
  Therefore $\mathfrak m_2 =
  \mathfrak m R_2 + (f)$ and $\mathfrak m_1 = \mathfrak m R_1 +
  (f^p)$.  
  Notice that $f^p \notin \mathfrak m_1^2$ and so by
  Nakayama's lemma, it is included in 
  some minimal set of generators $f^p, x_2, x_3,\ldots,x_m$
  for the $R_1$-ideal $\mathfrak m_1$.  Furthermore, we may choose
  these generators so 
  that $x_2,\ldots, x_m \in 
  \mathfrak m R_1$.  Here $m = \edim(R_1) = \edim(R)$, since
  $\ejump_{k_1/k}(R) = 0$.
  As ideals in $R_2$, we have 
\begin{align*}
(f,x_2,\ldots, x_m) &= (f) + (f^p,x_2,\ldots, x_m)\\
  & = (f) + \mathfrak m_1 R_2\\
  & = (f) + (f^p) + \mathfrak m R_2\\
  & = \mathfrak m_2.
\end{align*}
Therefore $\edim(R) \leq \edim(R_2) \leq m = \edim(R)$, 
and hence $\ejump_{k_2/k}(R) = 0$.
\end{proof}
\begin{corollary}\label{corollary-embedding-jump-height-one}
  Let $R$ be a local noetherian $k$-algebra.  
  If $K/k$ is a purely inseparable field extension
  and $K' := 
  K \cap k^{1/p}$, then 
  $$\ejump_{K/k}(R) = \ejump_{K'/k}(R).$$
  \end{corollary}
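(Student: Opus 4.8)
The plan is to reduce the general purely inseparable extension $K/k$ to a transfinite compositum of the one-variable extensions $k_n = k(\sqrt[p^n]{t})$ handled by Proposition~\ref{proposition-embedding-jump}, and thereby show that enlarging $K'=K\cap k^{1/p}$ to all of $K$ cannot increase the embedding jump. First I would invoke Remark~\ref{remark-jumps-are-nonnegative}(2): since $R$ is Noetherian, there is a \emph{finite} subextension $k\subseteq L\subseteq K$ with $\ejump_{K/k}(R)=\ejump_{L/k}(R)$, so it suffices to treat the case $[K:k]<\infty$. Also, because $\ejump_{K/k}(R)\ge\ejump_{K'/k}(R)\ge 0$ (monotonicity of embedding dimension under base change, Remark~\ref{remark-jumps-are-nonnegative}(1)), the content is the reverse inequality $\ejump_{K/k}(R)\le\ejump_{K'/k}(R)$.

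Next I would set up an induction on $[K:k]$ (now finite). Pick $t\in K^p\cap k$-style generator: more precisely, choose $t\in k\setminus k^p$ admitting a $p^n$-th root in $K$ for the largest possible $n\ge 1$, i.e.\ $\sqrt[p^n]{t}\in K$, and factor the extension as $k\subseteq k_n=k(\sqrt[p^n]{t})\subseteq K$ if such a $t$ exists; the degenerate case is when $K$ already lies in a pure compositum with no nontrivial ``$p^n$-th root of a $k$-element'' structure, which one must address. The key step is: by Proposition~\ref{proposition-embedding-jump} applied to $R$ and $t$, $\ejump_{k_n/k}(R)=\ejump_{k_1/k}(R)$, and $k_1=k(\sqrt[p]{t})\subseteq k^{1/p}$, so $\ejump_{k_1/k}(R)\le\ejump_{k^{1/p}/k}(R)$; but one wants to compare against $K'=K\cap k^{1/p}$, so I would additionally note $k_1\subseteq K\cap k^{1/p}=K'$, giving $\ejump_{k_n/k}(R)\le\ejump_{K'/k}(R)$. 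Then apply the inductive hypothesis to the smaller extension $K/k_n$ over the base $k_n$ with ring $R_n:=R\otimes_k k_n$ and with $(K\cap k_n^{1/p})$ in place of $K'$, and splice the jumps together via the additivity $\ejump_{K/k}(R)=\ejump_{K/k_n}(R_n)+\ejump_{k_n/k}(R)$, which follows from $\edim$ being computed after successive base change. The bookkeeping to ensure $K\cap k_n^{1/p}$ contributes no more than $K'$ does, and that $[K:k_n]<[K:k]$ strictly so the induction terminates, is the delicate part.

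The main obstacle I anticipate is precisely this combinatorial descent: a general finite purely inseparable $K/k$ need not be monogenic, and choosing the right ``first step'' $k_1\subseteq K'$ — one that genuinely peels off a factor of the jump while strictly shrinking the degree and keeping the intersection with the perfect closure under control — requires care. A cleaner alternative, which I would fall back on, is to argue directly in terms of $p$-bases: choose a $p$-basis of $k$ and express $K'$ and $K$ in terms of adjoining $p$-power roots of its elements, reducing to the one-element case $k_n/k$ repeatedly and invoking Proposition~\ref{proposition-embedding-jump} at each stage, with Proposition~\ref{proposition-schroeer} or Theorem~\ref{theorem-max-embedding-dim} guaranteeing that only the height-one part $K'$ can ever contribute to the jump. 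Either way, once the reduction to one-variable extensions is in place, Proposition~\ref{proposition-embedding-jump} does all the real work, and the corollary follows.
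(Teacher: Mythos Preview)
Your ``cleaner alternative'' at the end is exactly the paper's proof: after reducing to a finite subextension via Remark~\ref{remark-jumps-are-nonnegative}(2), write $K = k(\sqrt[p^{n_1}]{t_1}, \ldots, \sqrt[p^{n_r}]{t_r})$ for suitable $t_i \in k\setminus k^p$, and induct on $r$, applying Proposition~\ref{proposition-embedding-jump} at each step to replace a $\sqrt[p^{n_i}]{t_i}$ by $\sqrt[p]{t_i}$; one arrives at $K' = k(\sqrt[p]{t_1}, \ldots, \sqrt[p]{t_r}) = K \cap k^{1/p}$. The paper dispatches this in two sentences.

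Your primary approach --- induction on $[K:k]$, choosing $t\in k\setminus k^p$ with maximal exponent $n$ such that $\sqrt[p^n]{t}\in K$ --- is more convoluted than necessary, and the obstacle you anticipate is genuine rather than merely delicate: after base-changing to $k_n$, the height-one piece relevant to the inductive hypothesis is $K \cap k_n^{1/p}$, which is not $K'$, and there is no clean way to splice $\ejump_{K\cap k_n^{1/p}/k_n}(R_n)$ back into a statement about $\ejump_{K'/k}(R)$ via additivity. The induction on the number of generators sidesteps this entirely, since the target $K'$ is built directly, one generator at a time. Note also that neither Proposition~\ref{proposition-schroeer} nor Theorem~\ref{theorem-max-embedding-dim} is needed; Proposition~\ref{proposition-embedding-jump} alone carries the argument.
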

\begin{proof}
  Since $R$ is noetherian, we may assume that
  $K$ is finitely generated, so that $K =
  k(\sqrt[p^{n_1}]{t_1},\ldots, \sqrt[p^{n_r}]{t_r})$ 
  for certain $t_i \in k\setminus k^p$.
  It follows by inducting on $r$ and applying
  Proposition~\ref{proposition-embedding-jump} that
  $\ejump_{K/k}(R) =  \ejump_{K'/k}(R)$, where
  $K' := k(\sqrt[p]{t_1}, \ldots, \sqrt[p]{t_r}) = K \cap k^{1/p}$.  
%Since clearly 
%  $K' = K \cap k^{1/p}$, our proof is complete.
\end{proof}

We recover, as a further corollary,  the following result
(cf.~\cite[Thm.~IV.0.22.5.8]{EGA}): 

\begin{theorem}[EGA]
Let $X$ be a regular variety over a field $k$. 
  $X$ is smooth over $k$ if and only if $X \times_k k^{1/p}$ is
  a regular variety over $k^{1/p}$.
\end{theorem}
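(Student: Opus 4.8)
The plan is to deduce this from Corollary~\ref{corollary-embedding-jump-height-one} (or directly from Proposition~\ref{proposition-embedding-jump}), which reduces the problem of detecting embedding jumps over an arbitrary purely inseparable extension to detecting them over the height one extension $k^{1/p}/k$. First I would recall the standard characterization of smoothness in terms of embedding dimension: since $X$ is a regular variety over $k$, it is smooth over $k$ if and only if the geometric fiber $\overline X := X \times_k \bar k$ is regular, i.e.\ if and only if $\ejump_{\bar k/k}(x) = 0$ for every $x \in X$. (Here one uses that $\dim(\overline X) = \dim(X)$ and that regularity can be checked at points, together with the fact that embedding jumps are nonnegative, so that $\overline X$ is regular at a point $\bar x$ over $x$ exactly when $\edim_{\overline X}(\bar x) = \edim_X(x)$.) Similarly, $X \times_k k^{1/p}$ is a regular variety over $k^{1/p}$ if and only if $\ejump_{k^{1/p}/k}(x) = 0$ for all $x \in X$.

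Next I would note that it suffices to treat the case where $k$ is imperfect (if $k$ is perfect the statement is trivial, as both conditions always hold), and then apply Corollary~\ref{corollary-embedding-jump-height-one} with $K = \bar k$: since $\bar k \cap k^{1/p} = k^{1/p}$, we get $\ejump_{\bar k/k}(\OO_{X,x}) = \ejump_{k^{1/p}/k}(\OO_{X,x})$ for every point $x \in X$. Here one should be slightly careful: the corollary is stated for $K/k$ purely inseparable, and $\bar k/k$ need not be purely inseparable. To handle this I would first replace $\bar k$ by the purely inseparable closure $k^{\mathrm{perf}} = k^{1/p^\infty}$ of $k$ inside $\bar k$: for a regular $k$-variety $X$, the base change $X \times_k L$ is regular for any separable extension $L/k$, so $X$ is smooth over $k$ if and only if $X \times_k k^{1/p^\infty}$ is regular over $k^{1/p^\infty}$; equivalently, $\ejump_{\bar k/k}(x) = \ejump_{k^{1/p^\infty}/k}(x)$ for all $x$. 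Then $k^{1/p^\infty}$ is purely inseparable over $k$ with $k^{1/p^\infty} \cap k^{1/p} = k^{1/p}$, and Corollary~\ref{corollary-embedding-jump-height-one} applies directly.

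Putting these together: $X$ is smooth over $k$ $\iff$ $\ejump_{\bar k/k}(x) = 0$ for all $x \in X$ $\iff$ $\ejump_{k^{1/p^\infty}/k}(x) = 0$ for all $x$ $\iff$ $\ejump_{k^{1/p}/k}(x) = 0$ for all $x$ $\iff$ $X \times_k k^{1/p}$ is regular over $k^{1/p}$. One small point to verify at the start is that $X \times_k k^{1/p}$ is indeed a variety over $k^{1/p}$ (irreducible, reduced, finite type): finite type and the spectrum being a single irreducible space follow because $\Spec k^{1/p} \to \Spec k$ is a universal homeomorphism, and reducedness of $X \times_k k^{1/p}$ when $X$ is regular follows from Schröer's observation recalled in the introduction (geometric reducedness is a height one phenomenon, and regular schemes are geometrically reduced is not needed — one just needs that $X \times_k k^{1/p}$ is reduced, which again reduces to the fact that regular implies $X \times_k \bar k$ reduced, hence $X \times_k k^{1/p}$ reduced).

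The main obstacle is really just the bookkeeping around which extensions are purely inseparable versus separable versus algebraically closed, and making sure Corollary~\ref{corollary-embedding-jump-height-one} is being invoked in the form in which it was proved; the substantive mathematical content has already been extracted in Proposition~\ref{proposition-embedding-jump} and its corollary. No new estimates are needed.
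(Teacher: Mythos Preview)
Your argument is essentially identical to the paper's: both reduce from $\bar k$ to $k^{1/p^\infty}$ via separability, invoke Corollary~\ref{corollary-embedding-jump-height-one} to pass from $k^{1/p^\infty}$ to $k^{1/p}$, and translate regularity into the vanishing of the embedding jump. One caveat: in your final aside you assert that regularity of $X$ implies $X\times_k\bar k$ is reduced, which is false in general (take $X=\Spec k(t^{1/p})$ over $k=\mathbf F_p(t)$); fortunately this aside is unnecessary, since once $X\times_k k^{1/p}$ is shown to be regular it is automatically reduced and hence a variety.
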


\begin{proof}
  Let $R = \OO_{X,x}$, for an arbitrary point $x \in X$.
  It follows from Corollary~\ref{corollary-embedding-jump-height-one}
  that 
  $\ejump_{k^{1/p^\infty}/k}(R) = \ejump_{k^{1/p}/k}(R).$
  Since
  $\bar k/ k^{1/p^\infty}$ is a separable field extension, 
  $R \otimes_k \bar k$ is regular if and only if $R
  \otimes_k k^{1/p^\infty}$ is regular.
  The result then follows from the observation that for any field
  extension $k'/k$ and regular local ring $R$, the base change
  $R \otimes_k k'$ is 
  regular if and only if
  $\ejump_{k'/k}(R) = 0$.
\end{proof}

\section{Future directions}
\setcounter{equation}{0}
We leave as an open question for future research:

\begin{question}
  Does there exist a regular del Pezzo surface $X$ with $H^1(X,\OO_X)
  \neq 0$ over a field of transcendence degree $1$ over a perfect
  field? 
\end{question}

By Proposition~\ref{proposition-h1=0-in-p>3}, if an example does
exist, it occurs in characteristic $2$ or $3$.
The author has constructed examples in characteristic $2$ of regular
del Pezzo surfaces $X$ with $H^1(X,\OO_X) \neq 0$ over fields of
transcendence degree at least $3$ 
 (cf.~\cite{maddock-del}) and shall describe a similar example
over a field of transcendence degree $2$ in a
forthcoming paper.

%\section{Refinement of our main result}
%\input{refine}
%
%\section{Regularity and transcendence degree}
%\input{gen-reg}
%
%\section{Embedding dimension and transcendence degree}
%\input{embed-dim}

\bibliographystyle{alpha}
\bibliography{refs}

\end{document}